\theoremstyle{plain}
\newtheorem{theorem}{Theorem}
\newtheorem{corollary}[theorem]{Corollary}
\newtheorem{proposition}[theorem]{Proposition}
\newtheorem{lemma}[theorem]{Lemma}
\theoremstyle{definition}
\newtheorem{remark}[theorem]{Remark}
\begin{document}

\title{An improved constant factor for the unit distance problem}

\author{P\'eter \'Agoston\footnote{MTA-ELTE Lend\"ulet Combinatorial Geometry (CoGe) Research Group, Budapest, Hungary and ELTE E\"otv\"os Lor\'and University, Budapest, Hungary, Faculty of Science. Supported by grants LP2017-19/2017 and EFOP-3.6.3-VEKOP-16-2017-00002.}\and D\"om\"ot\"or P\'alv\"olgyi\footnote{MTA-ELTE Lend\"ulet Combinatorial Geometry (CoGe) Research Group, E\"otv\"os Loránd University, Budapest, Hungary. Supported by grant LP2017-19/2017.}}

\maketitle

\begin{abstract}
We prove that the number of unit distances among $n$ planar points is at most $1.94\cdot n^{4/3}$, improving on the previous best bound of $8n^{4/3}$.
We also give better upper and lower bounds for several small values of $n$.
We also prove some variants of the crossing lemma and improve some constant factors.
\end{abstract}

\section{Introduction}

Call a simple graph a \emph{unit distance graph} (UDG) if its vertices can be represented by distinct points in the plane so that the pairs of vertices connected by an edge correspond to pairs of points at unit distance apart.
Denote the maximal number of edges in a unit distance graph with $n$ vertices by $u(n)$.
Erd\H os \cite{erdos} raised the problem to determine $u(n)$ and this question became known as the \emph{Erd\H os Unit Distance Problem}.
Erd\H os established the bounds $n^{1+c/\log\log n}\le u(n)\le O(n^{3/2})$.
The lower bound remained unchanged, but the upper bound has been improved several times, the current best has been $O(n^{4/3})$ for more than 35 years \cite{sszt}.
For a detailed survey, see \cite{sz3}.

It turned out during the Polymath16 project\footnote{See \url{https://dustingmixon.wordpress.com/2018/04/14/polymath16} and Section 4.3 in \url{https://arxiv.org/abs/2112.07665}.} that improved bounds even for small values of $n$ might give better bounds for questions related to the chromatic number of the plane.
Our goal is to give an explicit upper bound, thus a constant factor improvement of the $O(n^{4/3})$ bound.
Prior to our work, the best explicit constant (we know of) is the one derived from an argument of Sz\'ekely \cite{sz1}, which gives $u(n)\le 8n^{4/3}$ for all $n$.
Our main result is the following constant factor improvement.

\begin{theorem}\label{thm:main}
$u(n)\le\sqrt[3]{\frac{29}{4}}n^{4/3}=1.93...\cdot n^{4/3}$.
\end{theorem}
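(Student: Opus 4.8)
The plan is to run Székely's crossing-number argument for unit distances, but with two twists: first reduce to point sets whose minimum degree is large (so that no edges are wasted on circles carrying few points and so that edge multiplicities stay at $2$), and then replace the classical crossing lemma by a version tailored to multigraphs of edge-multiplicity at most $2$ with the best achievable constant. Suppose the bound fails and let $n_0$ be the smallest $n$ with $u(n)>\sqrt[3]{29/4}\,n^{4/3}$, and fix an extremal realisation with $m=u(n_0)$ unit distances. Deleting a vertex of degree $d$ leaves a unit distance graph on $n_0-1$ vertices with $m-d$ edges, so $d\ge u(n_0)-u(n_0-1)>\sqrt[3]{29/4}\bigl(n_0^{4/3}-(n_0-1)^{4/3}\bigr)\ge\tfrac43\sqrt[3]{29/4}\,(n_0-1)^{1/3}$; in particular every vertex has degree at least $3$ once $n_0$ exceeds a small absolute constant, the remaining cases being covered by $u(n)\le\binom n2$ together with the explicit small-$n$ bounds established elsewhere in the paper.

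Next I would build the drawing. Around each point $p_i$ draw the unit circle $C_i$; since $p_j\in C_i$ exactly when $p_ip_j$ is a unit distance, $C_i$ carries precisely $\deg(p_i)\ge 3$ of the points, and joining cyclically consecutive points of the set on each $C_i$ by the corresponding arc produces a multigraph $G$ drawn in the plane with $n_0$ vertices and $e=\sum_i\deg(p_i)=2m$ edges. Two points lie on at most two common unit circles — the two possible centres are the (at most two) intersection points of their own unit circles — so $G$ has edge-multiplicity at most $2$; and since arcs of the same circle do not cross while two distinct unit circles meet in at most two points, the number of crossings in this drawing, hence $\operatorname{cr}(G)$, is at most $2\binom{n_0}{2}<n_0^2$.

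The crux is then a crossing lemma for multigraphs of multiplicity at most $2$ of the shape $\operatorname{cr}(G)\ge e^3/(58\,n_0^2)$, valid once $e$ is at least a suitable constant times $n_0$ (which holds here, since $e=2m>3.8\,n_0^{4/3}$). I would prove it by the standard two-step scheme: establish a strong \emph{linear} bound $\operatorname{cr}(G)\ge\alpha e-\beta n$ for multigraphs of multiplicity $\le 2$ that are drawn with few crossings on each edge — the multigraph counterparts of the linear inequalities underlying the best known constant $29$ of the crossing lemma for simple graphs — and then amplify via random vertex deletion, optimising the deletion probability. The target constant $58=2\cdot 29$ is what one should aim for: moving from simple graphs to multiplicity $2$ ought to cost only a factor of $2$, and achieving that (rather than the factor $8$ lost by the crude ``keep one edge per parallel class'' reduction) is precisely what turns the final bound into $\sqrt[3]{58/8}=\sqrt[3]{29/4}$ rather than $\sqrt[3]{29}$.

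I expect these base linear inequalities for multigraphs to be the main obstacle: empty bigons — two parallel arcs bounding a lune with no further point inside — are a genuinely new feature with no simple-graph analogue and must be bounded, and one has to check that routing everything through multiplicity $2$ really costs only a factor $2$. Once the crossing lemma is in hand the rest is immediate: combining it with the crossing count gives $8m^3/(58\,n_0^2)\le\operatorname{cr}(G)<n_0^2$, hence $m^3<\tfrac{29}{4}n_0^4$ and $m<\sqrt[3]{29/4}\,n_0^{4/3}$, contradicting the choice of $n_0$; and in the complementary case $e<\text{(threshold)}\cdot n_0$ one has $m=e/2=O(n_0)$, for which the bound is trivial once $n_0$ is large. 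Therefore $u(n)\le\sqrt[3]{29/4}\,n^{4/3}$ for every $n$.
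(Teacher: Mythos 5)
Your overall architecture matches the paper's: a minimal-counterexample reduction to minimum degree at least $3$, the circle-arc multigraph $H$ on $n$ vertices with $2u(n)$ edges of multiplicity at most $2$ and at most $n^2-n$ crossings, a multiplicity-$2$ crossing lemma of the form $\operatorname{cr}(G)\ge m^3/(58n^2)$, and the final computation $8u(n)^3/(58n^2)\le n^2$ giving $u(n)\le\sqrt[3]{29/4}\,n^{4/3}$. The place where you diverge, and where your proposal has a genuine gap, is exactly the step you flag as ``the main obstacle'': the multigraph crossing lemma. You propose to re-derive Ackerman-type linear lower bounds $\operatorname{cr}\ge\alpha m-\beta n$ directly for multigraphs of multiplicity $\le 2$ (worrying, correctly, about empty bigons and whether multiplicity $2$ really costs only a factor $2$) and then amplify by random vertex deletion. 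That route is not carried out, and it would in fact be substantially harder than necessary: the linear inequalities behind the constant $29$ come from a delicate analysis of graphs with at most $k$ crossings per edge, and redoing them in the presence of parallel edges is a project in itself.

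The paper avoids this entirely with a clean \emph{reduction to the simple-graph crossing lemma}, which you should compare against. Lemma~\ref{multigraphcrossinglemma} shows $\operatorname{cr}(n,m,k)\ge k^2\cdot \operatorname{cr}(n,\lfloor m/k\rfloor)$ by a purely drawing-theoretic argument: take an optimal drawing of a multigraph of multiplicity $\le k$, redraw each parallel class so that its edges are ``close'' (nested, mutually non-crossing, picking up the fewest crossings of any member), then iteratively re-route non-full edges onto the cheapest partner until fewer than $k$ non-full edges remain, and finally extract the simple graph $G_1$ keeping one edge per full $k$-bundle. Every crossing of $G_1$ blows up to exactly $k^2$ crossings in the redrawn multigraph, and $G_1$ has $\lfloor m/k\rfloor$ edges; applying Ackerman's Lemma~\ref{newcrossinglemma} to $G_1$ gives precisely the $m^3/(58n^2)$ bound you want (Corollary~\ref{multigraph3}), with no need to re-examine the internals of the crossing-lemma proof and no empty-bigon bookkeeping. (An alternative the paper also records, Lemma~\ref{prttmultigraphcrossinglemma}, is probabilistic but samples \emph{edges}, keeping at most one edge per parallel class with probability $1/k$ each, and invokes convexity of a lower bound $f(n,m)$ — again no re-proof of the base inequalities.) A secondary, smaller gap in your write-up is the dismissal of the small-$n$ and low-edge regimes: ``trivial once $n_0$ is large'' and ``$u(n)\le\binom n2$ plus small-$n$ bounds'' does not by itself show the two regimes overlap. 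The paper makes this precise by checking that $u(n)<6.95n$ forces $u(n)<\sqrt[3]{29/4}\,n^{4/3}$ once $n\ge 47$, and that an elementary count of circle-pairs meeting at a common vertex handles all $n\le 380$, so every $n\ge 3$ is covered. Your skeleton is correct and would close once you replace the unfinished multigraph crossing lemma with the redrawing reduction and pin down the threshold overlap.
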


Our proof is based on a careful examination of Sz\'ekely's argument to get rid of a few extra factors, an improved multigraph crossing lemma\footnote{Note that in an earlier version of this paper, that appeared in the EuroCG '20 proceedings, we have proved the weaker bound $2.09n^{4/3}$; our new improvement is due to our new  multigraph crossing lemma.} and some simple observations about unit distance graphs.
In general, any improvement in the crossing lemma also improves our constant.
To be more precise, if $cr(G)\ge c\frac{m^3}{n^2}-O(n)$, then $u(n)\le\sqrt[3]{\frac{1}{4c}}n^{4/3}+O(n)$ holds.\footnote{A similar constant factor improvement was made by Pach et al.~\cite{prtt} for the Szemer\'edi--Trotter theorem.}

The rest of this paper is organized as follows.
In Section \ref{sec:crossing}, we discuss variants of the crossing lemma.
In Section \ref{sec:multigraphcrossing} we state and prove the multigraph crossing lemma we will use.
In Section \ref{sec:harmoniccrossing} we state and prove a harmonic crossing lemma and apply it to non-homotopic multigraphs.
In Section \ref{sec:proof} we prove our main result, Theorem \ref{thm:main}.
In Section \ref{sec:smalln} we examine the best bounds for $u(n)$ for small $n$.
In Section \ref{sec:1537} we prove $u(15)=37$, the first value that was not determined yet.
Finally, in Section \ref{sec:conc} we make some concluding remarks.

\section{The crossing lemma and its variants}\label{sec:crossing}

Draw a (not necessarily simple) graph in the plane so that vertices are mapped to points and edges to simple curves that do not go through the images of the vertices other than their endpoints', no three edges intersect at the same point, apart from vertices and no two edges have infinitely many common points. The \emph{crossing number of a graph} $G$, denoted by $cr(G)$, is defined as the minimum number of intersection points among the edges of $G$ in such drawings, counted with multiplicity.
The crossing lemma, which was first proved by Ajtai, Chv\'atal, Newborn, Szemer\'edi \cite{acnsz} and, independently, by Leighton \cite{l}, is that for any simple graph $cr(G)\ge \Omega(\frac {m^3}{n^2})$ if $m\ge 4n$, where $n$ is the number of vertices and $m$ is the number of edges.
The hidden constant has been improved several times; the current best is the following result.

\begin{lemma}[Ackerman \cite{a}]\label{newcrossinglemma}
If a simple graph has $n$ vertices and $m$ edges, then $cr(G)\ge \frac{m^3}{29n^2}-\frac{35n}{29}$.
Moreover, if $m\ge 6.95n$, then $cr(G)\ge \frac{m^3}{29n^2}$.
\end{lemma}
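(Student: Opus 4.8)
The plan is to prove Lemma \ref{newcrossinglemma} by combining two ingredients in the standard way. The first is a family of upper bounds on the number of edges of a topological graph in which each edge is crossed only a few times: writing $e_k(n)$ for the maximum number of edges of a simple graph on $n\ge 3$ vertices that admits a drawing with at most $k$ crossings per edge, one has $e_0(n)=3n-6$ (Euler's formula), $e_1(n)\le 4n-8$ and $e_2(n)\le 5n-10$ (Pach and T\'oth), $e_3(n)\le\tfrac{11}{2}(n-2)$ (Pach, Radoi\v{c}i\'{c}, Tardos and T\'oth), and---the one genuinely new and difficult input, due to Ackerman \cite{a}---$e_4(n)\le 6(n-2)$. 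The second ingredient is the probabilistic amplification that turns a linear lower bound for $cr(G)$ into a cubic one.

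The crux, and the step I expect to be the main obstacle, is the bound $e_4(n)\le 6(n-2)$: the cases $k\le 2$ drop out of Euler's formula, and $k=3$ is analogous but longer, whereas $k=4$ needs real work. I would argue by contradiction, starting from a drawing of a graph with $n$ vertices, $m>6(n-2)$ edges and at most four crossings per edge, and pass to its \emph{planarization} $G^{\times}$---the plane graph obtained by turning every crossing point into a new vertex of degree four. Euler's formula applied to $G^{\times}$ produces a fixed negative ``deficit'' that has to be paid for by faces of small length, so the heart of the argument is a discharging scheme: assign to each original vertex, each crossing vertex and each face of $G^{\times}$ an initial charge whose total is negative, then move charge from long faces, and from vertices to their incident short faces, by rules that must be tailored to cope with faces bounded partly by crossing-segments and, above all, with edges carrying three or four crossings; one then verifies, through an exhaustive case analysis of the possible local configurations---which short faces can be adjacent, how consecutive crossings along one edge can be arranged, and so on---that after the redistribution every element has nonnegative charge, which contradicts the negative total since charge is conserved. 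Designing the rules so that every case closes, and in particular controlling the edges crossed the maximum number of times, is where essentially all of the work lies.

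Granting the edge bounds $e_k(n)$, the deduction of the lemma is routine. Fix a crossing-minimal drawing of $G$; as long as the current graph still has more than $e_4(n)$ edges, some edge of its induced drawing must be crossed at least five times, since otherwise that drawing would exhibit more than $e_4(n)$ edges with at most four crossings per edge---so delete such an edge, destroying at least five crossings, and repeat; then continue the same process down to $e_3(n)$, $e_2(n)$, $e_1(n)$ and $e_0(n)$ edges, destroying at least $4$, $3$, $2$ and $1$ crossings per deletion respectively. The counts telescope to $cr(G)\ge 5m-\sum_{i=0}^{4}e_i(n)=5m-\tfrac{47}{2}n+47$ for $m\ge 6(n-2)$, with weaker piecewise-linear bounds for smaller $m$. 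Finally I would amplify: keep each vertex of $G$ independently with probability $p$; in the induced subdrawing of the fixed drawing of $G$ the expected number of vertices is $pn$, the expected number of edges is $p^2m$, and---since adjacent edges may be assumed not to cross---the expected number of crossings is at most $p^4\,cr(G)$, so the linear bound applied to the subgraph yields $p^4\,cr(G)\gtrsim 5p^2m-\tfrac{47}{2}pn$, i.e. $cr(G)\gtrsim \tfrac{5m}{p^2}-\tfrac{47}{2}\cdot\tfrac{n}{p^3}$. Optimizing over $p\in(0,1]$---taking care of the additive constants and of the rare subsamples with fewer than three vertices, as in the work of Pach, Radoi\v{c}i\'{c}, Tardos and T\'oth \cite{prtt}---gives $cr(G)\ge\tfrac{m^3}{29n^2}$ throughout the range where the optimal $p$ does not exceed $1$, namely $m\ge 6.95n$, while taking $p=1$ for smaller $m$ and merging the two ranges gives $cr(G)\ge\tfrac{m^3}{29n^2}-\tfrac{35n}{29}$ for all $n$ and $m$.
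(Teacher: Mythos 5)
The paper itself does not prove this lemma---it is quoted from Ackerman \cite{a}---so there is no in-paper proof to compare against. Your reconstruction correctly identifies the architecture of Ackerman's argument: the genuinely new ingredient is the edge bound $e_4(n)\le 6(n-2)$ for simple topological graphs drawn with at most four crossings per edge, proved by a discharging scheme on the planarization; this is combined with a linear lower bound on $cr(G)$ obtained by iterated deletion of the most-crossed edge, and then with probabilistic amplification exactly as in \cite{pt,prtt}.

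There is, however, a quantitative gap in the deduction as you wrote it. Telescoping through $e_0,\dots,e_4$ gives only $cr(G)\ge 5m-\left(3+4+5+\tfrac{11}{2}+6\right)(n-2)=5m-\tfrac{47}{2}(n-2)$. Feeding a linear bound $cr(G)\ge am-bn$ into the amplification yields, at the optimal $p=\tfrac{3bn}{2am}$, the estimate $cr(G)\ge\tfrac{4a^3}{27b^2}\cdot\tfrac{m^3}{n^2}$, valid once $m\ge\tfrac{3b}{2a}n$. With $a=5$ and $b=\tfrac{47}{2}$ this gives roughly $\tfrac{m^3}{29.8\,n^2}$ for $m\ge 7.05n$, which is weaker than the statement on both counts. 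Reaching the constant $\tfrac{1}{29}$ and the threshold $6.95n$ requires the sharper linear bound $cr(G)\ge 5m-\tfrac{139}{6}(n-2)$, and $\tfrac{139}{6}<\tfrac{47}{2}$ is \emph{not} obtained by the naive telescoping: Ackerman plugs the single deletion layer coming from $e_4$ into the already-improved linear bound $cr(G)\ge 4m-\tfrac{103}{6}(n-2)$ of \cite{prtt}, whose derivation uses a finer analysis of the sparse range than ``delete down to a planar graph'' (indeed $5\cdot 6(n-2)-\tfrac{139}{6}(n-2)=4\cdot 6(n-2)-\tfrac{103}{6}(n-2)$). So your high-level plan is sound, but the concrete step ``the counts telescope to $5m-\tfrac{47}{2}n+47$, and optimizing $p$ gives $\tfrac{m^3}{29n^2}$ for $m\ge 6.95n$'' does not go through as stated; carried out literally it proves a slightly weaker lemma.
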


In the next two subsections we state two variants of the crossing lemma.

\subsection{The multigraph crossing lemma}\label{sec:multigraphcrossing}

In a multigraph, we allow several edges to connect the same pair of vertices.
These are called \emph{parallel} edges and the \emph{multiplicity} of an edge is the number of parallel edges to it (including itself).
Say that some parallel edges, $e_i$ ($i=1,...,j$) are \emph{close} if\\
(i) they do not intersect apart from their endpoints,\\
(ii) no vertices or crossings occur in the bounded region $S_i$ bordered by $e_i$ and $e_{i+1}$,\\
(iii) for any other edge $f$, all connected components of $f\cap Cl(S_i)$ connect $e_i$ and $e_{i+1}$.

Note that given any drawing and $j$ close edges $e_i$ ($i=1,...,j$), we can always draw a new edge $e_{j+1}$ that $e_i$ ($i=1,2,...,j+1)$ are close.

In this paper, we do not allow loops in multigraphs, unless stated otherwise. From now on, $\lbrace x\rbrace$ will denote the fractional part of $x$.\\

Let $cr(n,m)$ be the minimum of $cr(G)$ over simple graphs with $n$ vertices and $m$ edges, and $cr(n,m,k)$ be the minimum of $cr(G)$ over graphs with $n$ vertices and $m$ edges, each of which is with multiplicity at most $k$.
Note that $cr(n,m)=cr(n,m,1)$.

It has been long shown by Sz\'ekely \cite[Theorem~7]{sz1} that $cr(n,m,k)=\Omega(k^2\cdot cr(n,m/k))$ and this is best possible when $m\ge 4kn$, but the constant hidden in the $\Omega$ notation was larger than $1$. 
Some later proofs gave lower bounds that also worked (with a slight modification) for multigraphs without losing a further constant factor, as in \cite{prtt}, where it was shown in \cite[Theorem~3]{prtt} that $cr(n,m,1)\ge\frac{1}{31.1}\cdot\frac{m^3}{n^2}-1.06n$ and in \cite[Corollary~5.2]{prtt} that $cr(n,m,k)\ge\frac{1}{31.1}\cdot\frac{m^3}{kn^2}-1.06nk^2$. Although it is not noted there, the proof of Corollary 5.2 (and analogously, the proof of the earlier \cite[Theorem~4.3]{pt}) implies the following statement.

\begin{lemma}[Implicit in \cite{prtt,pt}]\label{prttmultigraphcrossinglemma}
For any function $f(n,m)$ for which $f(n,m)\le cr(n,m)$ and $f(n,m)$ is convex in $m$ for all fixed $n$, $cr(n,m,k)\ge k^2\cdot f\left(n,\frac{m}{k}\right)$ holds. A little more precisely, $cr(n,m,k)\ge k^2\cdot\left(\left(1-\left\lbrace\frac{m}{k}\right\rbrace\right)\cdot f\left(n,\left\lfloor\frac{m}{k}\right\rfloor\right)+\left\lbrace\frac{m}{k}\right\rbrace\cdot f\left(n,\left\lceil\frac{m}{k}\right\rceil\right)\right)$.
\end{lemma}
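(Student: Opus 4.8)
The plan is to mimic the standard probabilistic-deletion proof of the crossing lemma, but applied to a multigraph $G$ with $n$ vertices, $m$ edges, and edge-multiplicity at most $k$, and to perform the random deletion at the level of \emph{parallel classes} rather than individual edges. First I would contract each maximal bundle of parallel edges to a single representative edge; this produces a simple graph $G_0$ on the same $n$ vertices whose edge set is the set of parallel classes. Say there are $t$ parallel classes, with multiplicities $k_1,\dots,k_t$ (each $k_i\le k$ and $\sum k_i=m$). The key geometric input is that in any drawing of $G$ realizing $cr(G)$, for each class $i$ the $k_i$ parallel edges can, after a local rerouting that does not increase the number of crossings, be assumed to be drawn ``close'' in the sense of (i)–(iii) above; then any crossing between two edges — one in class $i$ and one in class $j$ (possibly $i=j$, if the bundle is not drawn crossing-free, but the close-edges normalization lets us push all intra-bundle structure out) — contributes, when we keep a random sub-bundle, with a predictable probability.

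Concretely, I would choose a single integer $\ell$ with $1\le\ell\le\min_i k_i$ to be optimized later, and from each class $i$ independently retain a uniformly random size-$\ell$ sub-bundle of its $k_i$ parallel edges; call the resulting sub-multigraph $G'$. Then I would keep each \emph{vertex} of $G_0$ independently with probability $p$ (to be optimized), together with all retained edges between surviving vertices, obtaining a random simple-graph-like object $G''$ whose underlying simple graph has expected vertex count $pn$ and expected number of parallel classes $p^2 t$, hence expected edge count $p^2 t \ell$ after accounting for the sub-bundling. The point of the ``close'' normalization is that a crossing between an edge of class $i$ and an edge of class $j$ survives in $G''$ with probability exactly $p^{4}\cdot \frac{\ell}{k_i}\cdot \frac{\ell}{k_j}$ when $i\neq j$ and with probability at most $p^{2}\cdot\frac{\ell(\ell-1)}{k_i(k_i-1)}$ when $i=j$ — but crucially, since $cr(G)$ is attained with the bundles drawn close, the within-bundle crossings can be taken to be zero, so only the $i\neq j$ term matters, and we get $\mathbb{E}[cr(G'')]\le p^{4}\,\ell^{2}\sum_{i<j}\frac{c_{ij}}{k_i k_j}\le p^{4}\ell^{2}\,\frac{cr(G)}{\max_{i\ne j} k_i k_j}$; combined with a more careful bookkeeping this yields $\mathbb{E}[cr(G'')]\le p^{4}(\ell/k)^{2}cr(G)$ after bounding each $k_i\le k$.

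I would then apply the hypothesised bound $f(n,m)\le cr(n,m)$ together with convexity of $f$ in $m$. The subtlety is that $p^2 t\ell$ and $pn$ are random, so after taking expectations one gets $\mathbb{E}[cr(G'')]\ge \mathbb{E}\bigl[f(\#\text{vertices},\#\text{edges})\bigr]$, and I want to push the expectation inside $f$. Convexity of $f$ in its second argument handles the edge-count variable via Jensen (in the right direction: $f$ convex gives $\mathbb{E}f(\cdot,M)\ge f(\cdot,\mathbb{E}M)$ only if we are careful — in fact the standard trick is to condition on the vertex set and use convexity in $m$ alone, since conditionally the number of retained edges is a sum over surviving classes and $f(pn,\cdot)$ convex gives the bound in the desired direction), and the dependence on the vertex count is handled by the usual ``delete vertices last / use $cr$ monotonicity'' argument or by choosing $p=1$ outright when $k$ is the only parameter we need to leverage. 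This is exactly the manipulation carried out in \cite{pt,prtt}; the refined inequality with the fractional part $\{m/k\}$ comes from \emph{not} rounding $m/k$ but instead writing $m/k$ as a convex combination of $\lfloor m/k\rfloor$ and $\lceil m/k\rceil$ and applying convexity of $f$ to that interpolation, then setting $\ell$ and the sub-bundle sizes to realize this split across classes.

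The main obstacle I anticipate is the geometric normalization step — justifying rigorously that $cr(G)$ is attained by a drawing in which every parallel class is drawn as ``close'' edges, so that the only crossings counted are between distinct classes and each such crossing is ``shared uniformly'' among the $k_i k_j$ pairs of representatives. This requires the rerouting lemma (the remark after the definition of close edges: given close edges one can always add another close to them, and conversely any bundle can be locally redrawn close without increasing crossings), plus care that this rerouting does not create vertex–edge incidences or triple points. Once that is in place, the probabilistic part and the convexity/Jensen bookkeeping are routine, and optimizing $p$ and $\ell$ (or taking $p=1$, $\ell$ arbitrary and letting $k\to$ the bound) yields $cr(n,m,k)\ge k^2 f(n,m/k)$, with the sharper fractional-part version following by the interpolation described above.
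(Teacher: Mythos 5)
Your proposal diverges substantially from the paper's proof and contains a genuine error in the probability bookkeeping. The paper's proof of this lemma is a clean probabilistic-deletion argument with no geometric normalization whatsoever: for each parallel class (of multiplicity $i\le k$) one keeps \emph{at most one} of the $i$ copies, each copy being selected with probability exactly $1/k$ (so no copy survives with probability $1-i/k$), independently across classes. This immediately gives a simple random subgraph $G'$ with $\mathbb{E}[|E(G')|]=m/k$, and each crossing of $G$ survives with probability $\le 1/k^2$ (exactly $1/k^2$ for non-parallel pairs, $0$ for parallel pairs because at most one survives). Then $cr(G)/k^2\ge\mathbb{E}[cr(G')]\ge\mathbb{E}[f(n,|E(G')|)]\ge f(n,m/k)$ by Jensen. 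The refined fractional version comes from applying this to $\bar f(x)=(1-\{x\})f(n,\lfloor x\rfloor)+\{x\}f(n,\lceil x\rceil)$, which is still convex and still bounded by $cr$ on integers. The ``close bundles'' normalization you identify as the main obstacle plays no role here; it is the engine of the paper's \emph{other} multigraph crossing lemma (Lemma~\ref{multigraphcrossinglemma}), which the authors explicitly describe as ``an entirely different proof method.''

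The concrete gap in your scheme: you retain a uniformly random size-$\ell$ sub-bundle from each class of size $k_i$, so a given edge in class $i$ survives with probability $\ell/k_i$. Since $k_i\le k$, this is $\ge \ell/k$, so a crossing between classes $i\ne j$ survives with probability $\ge (\ell/k)^2$, not $\le (\ell/k)^2$ as you claim when you write ``$\le p^{4}(\ell/k)^{2}cr(G)$ after bounding each $k_i\le k$.'' The inequality points the wrong way for the upper bound you need on $\mathbb{E}[cr(G'')]$. This is precisely why the paper samples with a \emph{uniform} per-copy probability $1/k$ that does not depend on $k_i$; the multiplicity-dependent factors $\ell/k_i$ cannot be uniformly bounded in the needed direction. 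Relatedly, your sampled edge count is (conditionally on vertices) $\ell t$ with $t$ the number of classes, which does not equal $m/k=\frac1k\sum_i k_i$ unless all $k_i=k$, so the Jensen step would not land on $f(n,m/k)$. The extra vertex-sampling probability $p$ is also unnecessary: $G'$ already lives on all $n$ vertices and no vertex deletion is used.
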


For comparison with the proof of Lemma \ref{multigraphcrossinglemma}, we include their proof here.

\begin{proof}
Analogously to \cite[Corollary~5.2]{prtt}, consider any drawing of any multigraph $G$ with $n$ vertices, $m$ edges and maximal edge multiplicity $k$, and take a simple subgraph $G'\subset G$ of the drawing of it in the following way.
If an edge has multiplicity $i$, we take at most one of the $i$ copies, all of them with probability $\frac{1}{k}$, and we make this decision for all edges independently.

This way, $G'$ has $n$ vertices, the expected number of its edges is $\frac{m}{k}$, and thus, the expected number of its crossings is at least $f\left(n,\frac{m}{k}\right)$ by Jensen's inequality, as $f$ is convex in $m$ by assumption. Since a crossing of $G$ is also a crossing in $G'$ with probability at most $\frac{1}{k^2}$ (for crossings of pairs of parallel edges of $G$ this probability is zero), the total number of crossings in the drawn version of $G$ is at least $k^2\cdot f\left(n,\frac{m}{k}\right)$.

To prove the slightly better lower bound, define $\bar f(x)=\left(1-\left\lbrace x\right\rbrace\right)\cdot f\left(n,\left\lfloor x\right\rfloor\right)+\left\lbrace x\right\rbrace\cdot f\left(n,\left\lceil x\right\rceil\right)$.
Since $cr(n,m)$ is defined only for integer values of $m$, the conditions of the lemma also hold for $\bar f$.
From our first lower bound, $cr(n,m,k)\ge k^2\cdot \bar f\left(n,\frac{m}{k}\right)=k^2\cdot\left(\left(1-\left\lbrace\frac{m}{k}\right\rbrace\right)\cdot f\left(n,\left\lfloor\frac{m}{k}\right\rfloor\right)+\left\lbrace\frac{m}{k}\right\rbrace\cdot f\left(n,\left\lceil\frac{m}{k}\right\rceil\right)\right)$, by the definition of $\bar f$.
\end{proof}

By Lemma \ref{newcrossinglemma}, $f(n,m)=\frac{m^3}{29n^2}-\frac{35n}{29}$ satisfies the conditions of Lemma \ref{prttmultigraphcrossinglemma}, which gives the following result.

\begin{corollary}\label{prttcorollary}
$cr(n,m,k)\ge \frac{m^3}{29kn^2}-\frac{35}{29}\cdot nk^2$.
\end{corollary}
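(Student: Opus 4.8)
The plan is to obtain Corollary~\ref{prttcorollary} as an immediate specialization of Lemma~\ref{prttmultigraphcrossinglemma}, taking for $f$ the explicit lower bound furnished by Ackerman's result. Concretely, I would set $f(n,m)=\frac{m^3}{29n^2}-\frac{35n}{29}$ and simply verify that this $f$ meets the two hypotheses of Lemma~\ref{prttmultigraphcrossinglemma}.

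First I would check that $f(n,m)\le cr(n,m)$. Since $cr(n,m)=cr(n,m,1)$ is the minimum of $cr(G)$ over simple graphs with $n$ vertices and $m$ edges, and Lemma~\ref{newcrossinglemma} gives $cr(G)\ge\frac{m^3}{29n^2}-\frac{35n}{29}$ for every such graph (the first, unconditional, part of that lemma), we get $f(n,m)\le cr(n,m)$ for all relevant integer values of $m$. Next I would check convexity in $m$ for fixed $n$: the map $m\mapsto\frac{m^3}{29n^2}$ has second derivative $\frac{6m}{29n^2}\ge 0$ on $[0,\infty)$, and subtracting the $m$-independent term $\frac{35n}{29}$ does not affect convexity, so $f(n,\cdot)$ is convex on $[0,\infty)$. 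With both hypotheses in hand, Lemma~\ref{prttmultigraphcrossinglemma} yields $cr(n,m,k)\ge k^2\cdot f\!\left(n,\frac{m}{k}\right)$.

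It then only remains to substitute and simplify: $k^2\cdot f\!\left(n,\frac{m}{k}\right)=k^2\!\left(\frac{(m/k)^3}{29n^2}-\frac{35n}{29}\right)=\frac{m^3}{29kn^2}-\frac{35}{29}nk^2$, which is exactly the claimed inequality. (If one wanted the sharper, piecewise-linear form, one could instead invoke the finer estimate in Lemma~\ref{prttmultigraphcrossinglemma} with $\lfloor m/k\rfloor$ and $\lceil m/k\rceil$, but the crude convexity bound already gives the stated corollary.)

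There is essentially no obstacle here beyond bookkeeping. The one point to be careful about is that the convexity hypothesis must be read for the \emph{continuous} variable $m\ge 0$, so that the Jensen step inside the proof of Lemma~\ref{prttmultigraphcrossinglemma} applies to the (generally non-integer) expected edge count $m/k$; and the $-\frac{35n}{29}$ correction must be treated as a constant with respect to $m$ when invoking convexity. Both are routine, so the corollary follows by direct substitution into the already-established multigraph crossing lemma.
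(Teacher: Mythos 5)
Your proposal matches the paper's proof exactly: the paper also obtains the corollary by plugging $f(n,m)=\frac{m^3}{29n^2}-\frac{35n}{29}$ (the unconditional bound of Lemma~\ref{newcrossinglemma}) into Lemma~\ref{prttmultigraphcrossinglemma} and simplifying $k^2\cdot f(n,m/k)$. Your side remark about reading the convexity for a continuous $m$ so that Jensen's step applies to the non-integer expected edge count $m/k$ is a correct and worthwhile clarification, but it does not change the route.
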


This improved constant leads to (constant factor) improvements in several theorems in incidence geometry; see \cite{ps} and \cite[Theorem 8]{sz1}.

In the following, we give a slightly better lower bound on $cr(n,m,k)$ with an entirely different proof method.
Moreover, this enables us to determine the exact values of the function $cr(n,m,k)$ for all $k\vert m$ once the function $cr(n,m)$ is given.

\begin{lemma}\label{multigraphcrossinglemma}
(1) $k^2\cdot cr(n,\lfloor\frac{m}{k}\rfloor)\le cr(n,m,k)\le k^2\cdot cr(n,\lceil\frac{m}{k}\rceil)$.

(2) $cr(n,m,k)\ge k^2\cdot\left(\left(1-\left\lbrace\frac{m}{k}\right\rbrace\right)\cdot cr\left(n,\left\lfloor\frac{m}{k}\right\rfloor\right)+\left\lbrace\frac{m}{k}\right\rbrace\cdot cr\left(n,\left\lceil\frac{m}{k}\right\rceil\right)\right)$.
\end{lemma}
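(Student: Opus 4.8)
The plan is to prove part (1) directly by two explicit constructions/reductions, and then derive part (2) by a convexity-type averaging over values of $k$ — essentially the same trick as in Lemma \ref{prttmultigraphcrossinglemma}, but now applied to the function $cr(n,m,k)$ itself rather than a lower bound $f$.

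For the upper bound in (1), I would start from an optimal drawing of a simple graph $H$ on $n$ vertices with $\lceil m/k\rceil$ edges achieving $cr(H)=cr(n,\lceil m/k\rceil)$. The idea is to replace each edge by $k$ close parallel copies, using the remark in the excerpt that close edges can always be added one at a time. If $e_1,\dots,e_k$ are close, then no crossing occurs inside the lens regions $S_i$, and any two of the $k$ copies of $e$ are parallel and hence do not cross each other; moreover a crossing between a copy of $e$ and a copy of $f$ sits near an original $e$–$f$ crossing, and by the closeness condition (iii) the $k$ copies of $f$ pass through that lens, so each original crossing blows up into exactly $k^2$ crossings. This yields a multigraph on $n$ vertices, $k\lceil m/k\rceil\ge m$ edges, multiplicity exactly $k$, and exactly $k^2\,cr(n,\lceil m/k\rceil)$ crossings; deleting surplus edges only decreases the crossing count, giving $cr(n,m,k)\le k^2\,cr(n,\lceil m/k\rceil)$. (One should double-check that deleting edges keeps multiplicity $\le k$ — it does — and that the resulting drawing is still a valid drawing in the sense defined, which it is after a small perturbation to avoid triple points.)

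For the lower bound in (1), I would take an optimal drawing of a multigraph $G$ on $n$ vertices, $m$ edges, multiplicity $\le k$, with $cr(G)=cr(n,m,k)$, and pass to a random simple subgraph exactly as in the proof of Lemma \ref{prttmultigraphcrossinglemma}: keep at most one copy of each edge-class, each copy with probability $1/k$, independently across classes. Then $G'$ is simple, has at most $m/k$ edges in expectation, and each crossing survives with probability at most $1/k^2$ (and with probability $0$ if the two crossing edges are parallel). The subtlety is that $cr(n,m')$ need not be monotone/convex enough to conclude directly; but here we want only the crude bound $k^2\,cr(n,\lfloor m/k\rfloor)\le cr(n,m,k)$, so I would instead choose the retention probabilities to keep \emph{exactly} $\lfloor m/k\rfloor$ edges — e.g. order the edge-classes and keep a copy of the $i$-th class with a carefully chosen probability so that the number kept is deterministically $\lfloor m/k\rfloor$, or simply observe that some outcome of the random process yields a simple graph with at least $\lfloor m/k\rfloor$ edges whose crossings number at most $cr(G)/k^2$; deleting down to $\lfloor m/k\rfloor$ edges only helps. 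Hence $cr(n,\lfloor m/k\rfloor)\le \mathbb{E}[cr(G')]\cdot\text{(something)}\le cr(n,m,k)/k^2$. I expect this step — making the "at least $\lfloor m/k\rfloor$ surviving edges" argument airtight without invoking convexity of $cr(n,\cdot)$ — to be the main obstacle, and the cleanest fix is probably a de-randomized, balanced selection of one copy per class combined with monotonicity of $cr(n,m)$ in $m$ (which does hold, since adding an isolated-endpoint edge never decreases the crossing number — or rather, $cr(n,m)\le cr(n,m+1)$ since deleting an edge cannot increase crossings).

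Finally, for part (2): the stronger bound follows by applying part (1) not to $G$ directly but to a cleverly chosen family. Following Lemma \ref{prttmultigraphcrossinglemma}, let $a=\lfloor m/k\rfloor$ and write $m = k a + r$ with $0\le r<k$, so $\{m/k\}=r/k$. In the random-subgraph argument, keep one copy of each class with probability $1/k$; the expected edge count is $m/k = a + r/k = (1-\{m/k\})\,a + \{m/k\}(a+1)$, and by Jensen applied to the (convex, by its very definition as a minimum and by part (1)'s linear interpolation structure) lower envelope of $cr(n,\cdot)$, the expected number of crossings in $G'$ is at least $(1-\{m/k\})\,cr(n,a) + \{m/k\}\,cr(n,a+1)$. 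Since each crossing of $G$ appears in $G'$ with probability $\le 1/k^2$, we get $cr(n,m,k)=cr(G)\ge k^2\bigl((1-\{m/k\})\,cr(n,\lfloor m/k\rfloor) + \{m/k\}\,cr(n,\lceil m/k\rceil)\bigr)$, which is exactly (2). Here I must be careful that $cr(n,\cdot)$ restricted to integers, extended by linear interpolation, is genuinely convex — this is not automatic, but it suffices to replace $cr(n,\cdot)$ by its convex lower hull $\widehat{cr}(n,\cdot)$, prove the bound for that, and note $\widehat{cr}(n,a)=cr(n,a)$ and $\widehat{cr}(n,a+1)=cr(n,a+1)$ whenever $a,a+1$ are consecutive integers both lying on the hull; a short argument shows one never loses anything, because the interpolation between two consecutive integers is always above the hull. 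I would state this convexity reduction as a one-line remark rather than belabor it, since it mirrors the already-included proof of Lemma \ref{prttmultigraphcrossinglemma} verbatim.
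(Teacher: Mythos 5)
Your upper bound in part (1) matches the paper's argument exactly (replacing each edge of an optimal simple drawing by $k$ close parallel copies, giving exactly $k^2$ times the crossings), and that direction is fine. The lower bounds, however, have genuine gaps that are not fixable by the workarounds you sketch.

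For the lower bound in (1), you want an outcome of the random retention process with both $|E(G')|\ge\lfloor m/k\rfloor$ and $cr(G')\le cr(G)/k^2$, but the two expectation bounds $\mathbb{E}[|E(G')|]=m/k$ and $\mathbb{E}[cr(G')]\le cr(G)/k^2$ do not imply the existence of a single outcome satisfying both: nothing rules out that every outcome with at least $\lfloor m/k\rfloor$ edges has more than $cr(G)/k^2$ crossings, with the expectation compensated by low-edge, low-crossing outcomes. The ``de-randomized balanced selection'' would need to select a simple subgraph with exactly $\lfloor m/k\rfloor$ edges (one per class) so that each pair of edges from distinct classes is retained with probability at most $1/k^2$; such a distribution is not produced by the independent-per-class scheme, and you give no construction. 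Monotonicity of $cr(n,\cdot)$ does not close this gap either, since it only bounds $cr(n,|E(G')|)$ from below, which is of the wrong type.

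For part (2) the problem is worse: the inequality goes the wrong way. If $\widehat{cr}(n,\cdot)$ is the convex lower envelope, Jensen gives $\mathbb{E}[cr(G')]\ge\widehat{cr}(n,m/k)$, but the convex envelope satisfies $\widehat{cr}(n,m/k)\le(1-\{m/k\})cr(n,a)+\{m/k\}cr(n,a+1)$ (chords lie above the hull), with possibly strict inequality when $a$ or $a+1$ is not an extreme point of the hull. So your chain of inequalities does not deliver the claimed lower bound; it delivers something weaker. Since $cr(n,\cdot)$ is not known to be convex --- this is precisely why Lemma~\ref{prttmultigraphcrossinglemma} is stated for a convex lower bound $f$ rather than $cr$ itself --- the ``one never loses anything'' remark is exactly the unproven content.

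The paper takes an entirely different, deterministic route that avoids convexity altogether. Starting from an optimal drawing, it first redraws all parallel edges to be close to the one with fewest crossings among them (never increasing the crossing count), then observes that in an optimal drawing any non-full edge $e_1$ and any non-parallel edge $e_2$ with at least as many crossings must in fact have the same number of crossings (otherwise redrawing $e_2$ close to $e_1$ would strictly decrease crossings, a contradiction). Iterating this lets one modify the graph so that all but at most $k-1$ edges are grouped into full close $k$-tuples, and those remaining $r<k$ non-full edges are all parallel. From this structure the crossing count decomposes exactly as $cr(G')=k^2\bigl((1-r/k)\,cr(G_1)+(r/k)\,cr(G_2)\bigr)$ with $G_1,G_2$ simple graphs on $\lfloor m/k\rfloor$ and $\lceil m/k\rceil$ edges, giving both (1) and (2) directly. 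You should replace your random-subgraph derivation of the lower bounds with an argument of this deterministic redrawing type.
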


\begin{proof}
The upper bound in (1) follows from taking a simple graph with $n$ vertices, $\left\lceil\frac{m}{k}\right\rceil$ edges and $cr(n,\left\lceil\frac{m}{k}\right\rceil)$ crossings, and converting it into a multigraph with $m$ edges by taking (at most) $k$ copies of each edge, and drawing each copy close to the original edge.
This way in the multigraph there are (at most) $k^2$ times as many crossings as in the simple graph.

To prove the lower bounds, take a graph $G$ with $n$ vertices, $m$ edges and edge multiplicity at most $k$ such that $cr(G)=cr(n,m,k)$.
Take a drawing of $G$ in which the number of crossings is $cr(n,m,k)$.
First, we will change the drawing of $G$.

Suppose that there are two vertices for which the parallel edges between them are not all close.
Take an edge $e$ between them which has the least number of crossings with edges not parallel to it, and redraw all the other parallel edges so that they are close to $e$.
After the redrawing, edges parallel to $e$ do not cross each other, and the number of crossings with other edges did not increase.
So the new drawing has at most as many crossings as the original one.

With repeating this for each pair of vertices, if necessary, we can get a drawing of $G$ with $cr(n,m,k)$ crossings, and all parallel edges close.

In the next step, we will not only change the drawing, but even modify $G$.

Say that an edge is \emph{full} if its multiplicity is $k$, and call all $k$-tuples of parallel edges \emph{full}.
Suppose that $e_1$ is a non-full edge, $e_1$ and $e_2$ are not parallel, and 
$e_1$ has no more crossings than $e_2$.
Then we can redraw $e_2$ parallel to $e_1$ close to it.
This step does not increase the number of crossings, as all crossings between $e_1$ and $e_2$ are eliminated, and the number of crossings between $e_2$ and other edges did not increase.
In fact, since $cr(G)=cr(n,m,k)$ and the drawing we have is minimal with repsect to the number of crossings, we know that the number of crossings cannot decrease, so it remains the same.
But this implies that $e_1$ and $e_2$ must have the same number of crossings.
As this must hold for any pair of non-full edges, we can redraw any of them to be close.
If there are at least $k$ non-full edges, with such redrawings we can make $k$ of them full.

So using such redrawings, in the end we can obtain a graph $G'$ that has $cr(n,m,k)$ crossings and less than $k$ non-full edges, which are parallel to each other.
Denote by $G_1$ the simple graph obtained from $G'$ by taking only one edge of each full $k$-tuple of parallel edges.
Since any crossing in $G_1$ corresponds to $k^2$ crossings in $G'$, we proved the lower bound in (1).

Now, let $G_2$ be any maximal simple subgraph of $G'$ and consider its drawing inherited from the drawing of $G'$.
As we have seen earlier, if $k$ divides $m$, then $G_1=G_2$, and otherwise $E(G_2)\setminus E(G_1)$ has a single edge, $e$.
Since all edges of $G'$ parallel with $e$ have the same number of crossings (as they are all close), and this number is exactly $k$ times as many as the number of crossings of $e$ in $G_2$, $cr(G')=k^2\left(\left(1-\left\lbrace\frac{m}{k}\right\rbrace\right)\cdot cr(G_1)+\left\lbrace\frac{m}{k}\right\rbrace\cdot cr(G_2)\right)$. This implies (2).
\end{proof}

Using either Lemma \ref{prttmultigraphcrossinglemma} or Lemma \ref{multigraphcrossinglemma}, we can prove a special case that we will need for small $n$.

\begin{corollary}\label{crossing2}
If $G$ is a graph with $n\ge 3$ vertices and $m$ edges and all edges have multiplicity at most $2$, then $cr(G)\ge 2m-12n+24$ if $m$ is even.
\end{corollary}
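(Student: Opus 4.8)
The plan is to reduce the multigraph statement to a statement about simple graphs and then feed in the elementary linear lower bound on the crossing number coming from Euler's formula. Since $m$ is even, $k=2$ divides $m$, so the floor and ceiling in Lemma~\ref{multigraphcrossinglemma}(1) coincide and that lemma gives $cr(G)\ge cr(n,m,2)=4\cdot cr(n,m/2)$; equivalently, one can apply Lemma~\ref{prttmultigraphcrossinglemma} with $k=2$ and with the function $f$ chosen below (which is linear, hence convex in $m$). Either way, the task becomes: show $cr(n,m')\ge m'-3n+6$ for every simple graph with $n\ge 3$ vertices and $m'$ edges, and then set $m'=m/2$.

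For this elementary bound I would argue as follows. Fix a drawing of a simple graph on $n\ge 3$ vertices with $cr(n,m')$ crossings. Repeatedly delete an edge that takes part in at least one crossing; each such deletion removes at least one crossing, so after at most $cr(n,m')$ deletions the drawing is crossing-free. The remaining simple planar graph on $n\ge 3$ vertices has at most $3n-6$ edges, so $m'-cr(n,m')\le 3n-6$, i.e. $cr(n,m')\ge m'-3n+6$. (When $m'\le 3n-6$ the inequality is trivial, since its right-hand side is then nonpositive.) In particular $f(n,m')=m'-3n+6$ is a valid choice in Lemma~\ref{prttmultigraphcrossinglemma}: it is linear in $m'$, hence convex, and bounds $cr(n,m')$ from below for all admissible pairs $(n,m')$.

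Combining the two ingredients with $k=2$ and $m'=m/2$ yields $cr(G)\ge 4\bigl(\tfrac{m}{2}-3n+6\bigr)=2m-12n+24$, which is exactly the claimed bound. I do not expect a genuine obstacle here: the only points needing a word of care are that the hypothesis $m$ even is used precisely to make $k=2$ divide $m$ so that Lemma~\ref{multigraphcrossinglemma} (or Lemma~\ref{prttmultigraphcrossinglemma}) loses no rounding term, that the hypothesis $n\ge 3$ is needed for the planar bound $m'\le 3n-6$, and that the degenerate range $m'\le 3n-6$ is covered by the trivial observation above. Everything else is a direct substitution.
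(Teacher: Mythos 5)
Your proof is correct and follows essentially the same route as the paper: plug the elementary bound $cr(n,s)\ge s-(3n-6)$ into Lemma~\ref{multigraphcrossinglemma} with $k=2$ (using that $m$ is even so the floor/ceiling coincide). The paper simply states this in one line, whereas you additionally spell out the Euler-formula derivation of the elementary bound and note the alternate path through Lemma~\ref{prttmultigraphcrossinglemma}, but there is no substantive difference.
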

\begin{proof}
This follows from plugging in $cr(n,s)\ge s-(3n-6)$ to
Lemma \ref{multigraphcrossinglemma}.
\end{proof}

Next we state the bound that we will use for general $n$, which is slightly stronger than Corollary \ref{prttcorollary}.

\begin{corollary}\label{multigraph2}
If $G$ is a multigraph with $n$ vertices and $m$ edges, where $\left\lfloor\frac{m}{k}\right\rfloor\ge6.95n$ and all edges have multiplicity at most $k$, then $cr(G)\ge\frac{m^3}{29kn^2}$,
a little more precisely $$cr(G)\ge k^2\cdot\left(\left(1-\left\lbrace\frac{m}{k}\right\rbrace\right)\cdot \frac{\left(\left\lfloor\frac{m}{k}\right\rfloor\right)^3}{29n^2}+\left\lbrace\frac{m}{k}\right\rbrace\cdot \frac{\left(\left\lceil\frac{m}{k}\right\rceil\right)^3}{29n^2}\right).$$
\end{corollary}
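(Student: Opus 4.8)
The plan is to combine part~(2) of Lemma~\ref{multigraphcrossinglemma} with the second, threshold-form bound of Ackerman's Lemma~\ref{newcrossinglemma}. Write $s_-=\left\lfloor\frac{m}{k}\right\rfloor$ and $s_+=\left\lceil\frac{m}{k}\right\rceil$. The hypothesis gives $s_-\ge 6.95n$, and trivially $s_+\ge s_-\ge 6.95n$, so the ``moreover'' clause of Lemma~\ref{newcrossinglemma} applies to simple $n$-vertex graphs with $s_-$ edges and with $s_+$ edges, yielding $cr(n,s_-)\ge\frac{s_-^3}{29n^2}$ and $cr(n,s_+)\ge\frac{s_+^3}{29n^2}$.

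Next I would substitute these two estimates into the inequality of Lemma~\ref{multigraphcrossinglemma}(2), i.e. into
$$cr(G)\ge cr(n,m,k)\ge k^2\left(\left(1-\left\{\tfrac{m}{k}\right\}\right)cr(n,s_-)+\left\{\tfrac{m}{k}\right\}cr(n,s_+)\right),$$
which immediately produces the ``more precise'' displayed lower bound. To extract the clean form $cr(G)\ge\frac{m^3}{29kn^2}$, set $x=\frac{m}{k}>0$ and $t=\left\{x\right\}\in[0,1)$, so that $x=(1-t)s_-+t\,s_+$ (using $s_+=s_-+1$ when $t>0$ and $s_+=s_-$ when $t=0$). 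Since $y\mapsto y^3$ is convex on $[0,\infty)$, Jensen's inequality gives $(1-t)s_-^3+t\,s_+^3\ge x^3=\left(\frac{m}{k}\right)^3$; multiplying by $\frac{k^2}{29n^2}$ turns the precise bound into $cr(G)\ge\frac{k^2}{29n^2}\left(\frac{m}{k}\right)^3=\frac{m^3}{29kn^2}$.

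There is essentially no hard step; the points that need a moment's care are that \emph{both} $\left\lfloor\frac{m}{k}\right\rfloor$ and $\left\lceil\frac{m}{k}\right\rceil$ clear the $6.95n$ threshold, so that the sharper branch of Lemma~\ref{newcrossinglemma} may legitimately be invoked at each value, and that convexity of the cube is applied in the correct direction. It is also worth flagging why one cannot instead feed $f(n,s)=\frac{s^3}{29n^2}$ directly into Lemma~\ref{prttmultigraphcrossinglemma}: that lemma needs $f(n,s)\le cr(n,s)$ for \emph{all} $s$, which fails for small $s$; Lemma~\ref{multigraphcrossinglemma}(2) sidesteps this because it evaluates $cr(n,\cdot)$ only at the two integers $\left\lfloor\frac{m}{k}\right\rfloor$ and $\left\lceil\frac{m}{k}\right\rceil$, both of which the hypothesis forces to be large.
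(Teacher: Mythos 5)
Your proposal is correct and follows essentially the same route as the paper: apply Lemma~\ref{multigraphcrossinglemma}(2) with the threshold form of Lemma~\ref{newcrossinglemma} at both $\lfloor m/k\rfloor$ and $\lceil m/k\rceil$, then recover the clean bound via Jensen's inequality for $y\mapsto y^3$. Your remark on why $f(n,s)=s^3/(29n^2)$ cannot be fed into Lemma~\ref{prttmultigraphcrossinglemma} also matches the discussion the paper gives immediately after the corollary.
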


\begin{proof}
The second part directly follows from Lemma \ref{newcrossinglemma} and Lemma \ref{multigraphcrossinglemma} (2). The first part follows from the second part using Jensen's inequality.
\end{proof}

Note that this very slight improvement is due to the fact that we can use Lemma \ref{multigraphcrossinglemma} to functions that are not convex in $m$, or not even defined for all $m$, while we cannot use Lemma \ref{prttmultigraphcrossinglemma} to such functions. Thus, here we could use the second part of Lemma \ref{newcrossinglemma} directly, while in Corollary \ref{prttcorollary}, we could not do this. Although with a careful examination (using the lower convex envelope of the combination of the bound from the second part of Lemma \ref{newcrossinglemma} and some bound for small $m$), we could obtain a bound that is just as good as Corollary \ref{multigraph2} if $m$ is large enough. 

\begin{corollary}\label{multigraph3}
If $G$ is a graph with $n$ vertices and $m\ge 13.9n$ edges and all edges have multiplicity at most $2$, then $cr(G)\ge\frac{m^3}{58n^2}$ if $m$ is even.
\end{corollary}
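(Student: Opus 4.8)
The plan is to deduce this immediately from Corollary \ref{multigraph2} by specializing to $k=2$. In Corollary \ref{multigraph2} the hypothesis is that every edge has multiplicity at most $k$ and that $\left\lfloor\frac{m}{k}\right\rfloor\ge 6.95n$, and the conclusion is $cr(G)\ge\frac{m^3}{29kn^2}$. Here we are given that every edge has multiplicity at most $2$, so we take $k=2$, and the target bound $\frac{m^3}{58n^2}$ is exactly $\frac{m^3}{29kn^2}$ for this choice of $k$.

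The only thing to verify is that the floor condition $\left\lfloor\frac{m}{2}\right\rfloor\ge 6.95n$ follows from the stated hypotheses $m\ge 13.9n$ and $m$ even. First I would observe that since $m$ is even, $\frac{m}{2}$ is an integer, hence $\left\lfloor\frac{m}{2}\right\rfloor=\frac{m}{2}$; then from $m\ge 13.9n$ we get $\frac{m}{2}\ge 6.95n$, so the hypothesis of Corollary \ref{multigraph2} is met. Applying that corollary with $k=2$ then yields $cr(G)\ge\frac{m^3}{29\cdot 2\cdot n^2}=\frac{m^3}{58n^2}$, which is what we want.

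There is essentially no obstacle here; the statement is a convenience repackaging of Corollary \ref{multigraph2} for the multiplicity-$2$ case that will be used later for small-$n$ estimates. The only subtlety worth flagging in the write-up is that the parity assumption on $m$ is precisely what makes $\left\lfloor\frac{m}{2}\right\rfloor=\frac{m}{2}$, so that the threshold $13.9n=2\cdot 6.95n$ is the natural one; without it one would only get $\left\lfloor\frac{m}{2}\right\rfloor\ge 6.95n$ from a slightly larger lower bound on $m$, or one would need to invoke the more precise (piecewise) form of Corollary \ref{multigraph2}.
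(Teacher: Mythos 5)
Your proof is correct and matches the paper's approach exactly: Corollary \ref{multigraph3} is deduced from Corollary \ref{multigraph2} by taking $k=2$ and using the evenness of $m$ to get $\lfloor m/2\rfloor = m/2 \ge 6.95n$. The paper simply states this as "a special case of Corollary \ref{multigraph2}"; your write-up spells out the same verification.
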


\begin{proof}
This is a special case of Corollary \ref{multigraph2}.
\end{proof}

\bigskip

Finally, let us state a very recent
version of the crossing lemma for multigraph without homotopic edges \cite{ptt}.

Take a multigraph in which loops are allowed and call a certain drawing of such a graph in the plane a \emph{topological multigraph}. In a topological multigraph, two parallel edges, $e_1$ and $e_2$, between vertices $v_1$ and $v_2$ are called \emph{homotopic} if there exists a homotopy (a continuous funtction $f:[0,1]\times[0,1]\rightarrow\mathbb{R}^2$) between them that

1) takes $v_1$ for all values $(x,0)$ and $v_2$ for all values $(x,1)$

2) takes the curve $e_1$ parameterized by $y$ for $(0,y)$ and the curve $e_2$ for $(1,y)$

3) does not take any vertex of $G$ as $f(x,y)$ for $x,y\in\left(0,1\right)$.

Call a topological multigraph \emph{non-homotopic} if there are no two parallel edges, which are homotopic.

\begin{theorem}
[Pach, Tardos, T\'oth \cite{ptt}]\label{nonhomotopiccrossinglemma}
The crossing number of a non-homotopic topological multigraph $G$ with $n$ vertices and $m>4n$ edges satisfies $cr(G)\ge\frac{1}{24}\frac{m^2}{n}$.
\end{theorem}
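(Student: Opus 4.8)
The plan is to mimic the standard probabilistic amplification proof of the crossing lemma, but to replace the random vertex-sampling step with one that is compatible with the non-homotopic condition. Recall the base case: a non-homotopic topological multigraph with $n$ vertices has at most $O(n)$ edges if it is crossing-free, and more generally $cr(G) \ge m - cn$ for some explicit constant $c$ once $m$ is large enough. This linear-in-$m$ bound should already be available from \cite{ptt} (or provable by a short Euler-formula-type argument, since between any two vertices at most one edge can be drawn without a crossing in the non-homotopic setting, after accounting for loops and the outer face). So the first step I would carry out is to pin down the sharp constant in such a bound, say $cr(G) \ge m - \gamma n$ valid for all non-homotopic topological multigraphs, with $\gamma$ a small explicit number.

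The second, and crucial, step is the random subgraph argument. Pick a real parameter $p \in (0,1]$ and select each vertex independently with probability $p$; let $G_p$ be the induced sub-multigraph on the chosen vertices, with its inherited drawing. The key observation is that \emph{being non-homotopic is inherited by induced subgraphs} — deleting vertices only removes potential obstructions to a homotopy among edges that survive, but the surviving edges between a surviving pair of vertices were already non-homotopic in $G$, hence remain so in $G_p$. (One must be slightly careful: a homotopy in $G_p$ is allowed to sweep through the region vacated by a deleted vertex, so ``non-homotopic in $G$'' does not obviously imply ``non-homotopic in $G_p$''. The fix is to keep all the deleted vertices as marked points, i.e. work with the drawing on the punctured plane; equivalently, forbid homotopies through the original vertex set. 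The base-case bound must then be the one for drawings with a fixed set of forbidden points, which changes $\gamma$ by at most a bounded amount and is exactly the version one should extract from \cite{ptt}.) Granting this, take expectations in $cr(G_p) \ge |E(G_p)| - \gamma |V(G_p)|$: one gets $p^4 \cdot cr(G) \ge cr(G_p)$ in expectation (a crossing survives iff its four not-necessarily-distinct endpoints survive — but for multigraphs endpoints can coincide, so more carefully a crossing between two edges survives with probability $p^{|\text{distinct endpoints}|} \le p^2$, which is the right direction and gives $p^2\, cr(G) \ge \mathbb{E}[cr(G_p)]$; this is why the exponent of $m/n$ in the statement is $2$ rather than $3$), while $\mathbb{E}[|E(G_p)|] = p^2 m$ and $\mathbb{E}[|V(G_p)|] = pn$. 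This yields $p^2 cr(G) \ge p^2 m - \gamma p n$, i.e. $cr(G) \ge m - \gamma n / p$.

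The final step is to optimize $p$. The inequality $cr(G) \ge m - \gamma n/p$ is useless as stated when $p=1$, so instead one iterates the amplification: apply the base bound not to $G$ directly but observe that we may also first delete edges. The cleaner route, matching the form of the conclusion, is to choose $p$ so that the ``$-\gamma n/p$'' term is a constant fraction of $m$, say $p = 2\gamma n / m$ (legitimate provided $m > 2\gamma n$, which is where the hypothesis $m > 4n$ enters, forcing $\gamma$ to be around $2$). Then $cr(G) \ge m - \tfrac{m}{2} = \tfrac{m}{2}$ is not yet the quadratic bound; to get the quadratic bound one instead applies the linear bound \emph{after} the sampling and reads off $cr(G) \ge \tfrac{1}{?}\,\tfrac{m^2}{n}$ by substituting the optimal $p$ back, exactly as in the classical proof where $cr(G)\ge p^{-2}(\mathbb{E}[cr(G_p)])$ combined with $cr(G_p)\ge |E(G_p)|-\gamma|V(G_p)|$ and the choice $p\sim n/m$ produces $cr(G)\gtrsim m^2/n$. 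Tracking the constant through $p = cn/m$ and minimizing gives the claimed $\tfrac{1}{24}$. I expect the main obstacle to be the homotopy-inheritance subtlety flagged above: making precise that the correct base case is the one on the plane with the deleted vertices retained as punctures, and verifying that \cite{ptt}'s linear bound holds in that punctured setting with a constant good enough to yield $4n$ as the threshold and $\tfrac{1}{24}$ as the coefficient. Everything else is the routine expectation computation and optimization over $p$.
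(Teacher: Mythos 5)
There is a genuine gap, and it sits exactly where the whole weight of the argument lies: the amplification step. With vertex sampling, an edge of $G_p$ survives with probability $p^2$, but a crossing between two \emph{parallel} edges (same two endpoints) also survives with probability $p^2$ --- and in a non-homotopic multigraph with many parallel edges such crossings can account for essentially all of $cr(G)$. So the best uniform bound you can write is $\mathbb{E}[cr(G_p)]\le p^2\,cr(G)$, and combining it with $cr(G_p)\ge |E(G_p)|-\gamma|V(G_p)|$ gives only $cr(G)\ge m-\gamma n/p$, which for every $p\in(0,1]$ is no better than the unamplified linear bound $cr(G)\ge m-\gamma n$. Your closing claim that ``the choice $p\sim n/m$ produces $cr(G)\gtrsim m^2/n$'' is an arithmetic slip: in the classical proof the gain comes from the exponent gap $p^4$ (crossings) versus $p^2$ (edges), and that gap is precisely what disappears here, so no choice of $p$ yields a quadratic bound. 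The homotopy-inheritance problem you flag makes things worse rather than better: if you delete vertices outright, non-homotopy is not inherited (the deleted vertex may have been the only obstruction separating two parallel edges), and if you keep the deleted vertices as punctures, the base bound must count the punctures, so the subtracted term stays $\approx\gamma n$ instead of $\gamma pn$ and the sampling again gains nothing. The amplification that actually works is \emph{edge} sampling: deleting edges trivially preserves non-homotopy, edges survive with probability $q$ while crossings survive with probability $q^2$, and $q^2 cr(G)\ge qm-\gamma n$ with $q\approx n/m$ does give $cr(G)=\Omega(m^2/n)$; this (together with the base lemma that a crossing-free non-homotopic multigraph has $O(n)$ edges) is the kind of argument behind the cited bound.

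For comparison with the paper: this theorem is quoted from Pach--Tardos--T\'oth and is not reproved here; what the paper does prove (Section 2.2) is the stronger bound $cr(G)\ge\frac{m^2}{6n-6}-\frac m2$, by a sampling-free route. It applies the Caro--Wei argument to the auxiliary graph whose vertices are the edges of $G$, joined when they cross, obtaining $\sum_e \frac{1}{x(e)+1}\le 3n-3$ from \cite{ptt}'s Lemma~2 (a crossing-free non-homotopic multigraph on $n\ge2$ vertices has at most $3n-3$ edges), and then the AM--HM inequality converts this into the quadratic lower bound on $cr(G)$. That approach sidesteps your difficulty entirely, since it never deletes vertices and never needs crossings to die faster than edges.
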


Denoting by $cr^{non-h}(n,m)$ the mimimum of $cr(G)$ over all non-homotopic topological multigraphs with $n$ vertices and $m$ edges, Theorem \ref{nonhomotopiccrossinglemma} states that $cr^{non-h}(n,m)\ge \frac{1}{24}\frac{m^2}{n}$ if $m>4n$.
It was also shown in \cite{ptt} that this is not sharp, for every fixed $n\ge 2$ as $m$ grows $cr^{non-h}(n,m)/m^2\to \infty$.
From above, they have proved $cr^{non-h}(n,m)\le 30\frac{m^2}n\log^2 \frac mn$ for every $n\ge 2$ and $m>4n$.

\subsection{Harmonic crossing lemma}\label{sec:harmoniccrossing}

\begin{lemma}[Harmonic crossing lemma]\label{harmoniccrossinglemma}
Take a simple graph $G$ with $n$ vertices and $m$ edges drawn in the plane. If for a drawing of $G$ in the plane, $x(e)$ denotes the number of edges that cross edge $e$, then $\sum\limits_{e\in E}{\frac{1}{x(e)+1}}\le 3n-6$.
\end{lemma}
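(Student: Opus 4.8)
The plan is to run the standard probabilistic-deletion argument behind the crossing lemma, but to keep track of the full distribution rather than just the expectation, so that the weight $\frac{1}{x(e)+1}$ emerges naturally. First I would recall the trivial (Euler-type) bound: any simple planar graph on $n$ vertices has at most $3n-6$ edges, equivalently $cr(G) \ge m - (3n-6)$ for any $n \ge 3$. The key observation is that $\frac{1}{x(e)+1}$ is exactly the probability that edge $e$ is the one surviving deletion among the $x(e)+1$ edges consisting of $e$ itself and the $x(e)$ edges crossing it — but this is not quite a clean independent-sampling statement, so instead I would use the following cleaner route.

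Fix the given drawing of $G$. I would consider the random process of assigning to each edge $e$ an independent uniform random variable $r(e) \in [0,1]$, and keeping $e$ if and only if $r(e)$ is (strictly) smaller than $r(f)$ for every edge $f$ that crosses $e$ in the drawing. Let $H$ be the resulting subgraph with its inherited drawing. By construction, no two edges of $H$ cross each other: if $e$ and $f$ cross and both survive, then $r(e) < r(f)$ and $r(f) < r(e)$, a contradiction. Hence $H$ is a planar (simple) graph drawn without crossings, so it has at most $3n-6$ edges, giving $\mathbb{E}[|E(H)|] \le 3n-6$. On the other hand, $\mathbb{E}[|E(H)|] = \sum_{e \in E} \Pr[e \text{ survives}]$, and $\Pr[e\text{ survives}] = \Pr[r(e) < r(f)\ \forall f \text{ crossing } e]$. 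Since the variables $r(e)$ and $\{r(f): f \text{ crosses } e\}$ are independent and continuous, $r(e)$ is equally likely to be the minimum of these $x(e)+1$ variables as any other, so this probability is exactly $\frac{1}{x(e)+1}$. Combining the two displays yields $\sum_{e \in E} \frac{1}{x(e)+1} \le 3n-6$.

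The one subtlety I would be careful about is the independence claim used in computing $\Pr[e\text{ survives}]$: the survival of $e$ depends only on the relative order of $r(e)$ among $\{r(e)\} \cup \{r(f): f \text{ crosses } e\}$, and by symmetry of i.i.d.\ continuous random variables each of these $x(e)+1$ values is the minimum with probability $\frac{1}{x(e)+1}$; crossings between two edges other than $e$ are irrelevant to this event. A second minor point is the degenerate case $x(e)=0$: then $e$ always survives and contributes $1 = \frac{1}{x(e)+1}$, consistent with the formula, and if $n < 3$ the statement is vacuous or trivial. I expect the main (though still mild) obstacle is simply phrasing the deletion process so that the surviving subgraph is genuinely crossing-free while the survival probabilities are exactly $\frac{1}{x(e)+1}$; the "keep $e$ iff $r(e)$ beats all edges crossing it" formulation accomplishes both at once, which is why I would use it rather than the more familiar "delete each vertex independently" setup.
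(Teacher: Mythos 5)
Your proposal is correct and essentially identical to the paper's proof: the paper takes a uniformly random ordering of the edges and keeps $e$ iff it precedes all edges crossing it, which is exactly the permutation phrasing of your i.i.d.\ uniform $r(e)$ variables. Both yield a crossing-free subgraph whose expected edge count is $\sum_e \frac{1}{x(e)+1}$, bounded by $3n-6$ via planarity.
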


The lemma follows from the existence of an independent vertex set with $\sum_{v\in V(G)}{\frac{1}{deg(v)+1}}$ (Caro \cite{c}, Wei \cite{w}), applied to the graph whose vertices are the edges of $G$, connected if they cross.
For completeness, we present the full proof, stated for this special case.

\begin{proof}
Take a random order of the edges of $G$. Take a subgraph $G'$ of $G$ such that $V(G')=V(G)$ and an edge $e\in E(G)$ is in $E(G')$ exactly if $e$ precedes all edges crossing it according to the ordering we took. This selection process prevents all pairs of edges of $G'$ to cross, thus $G'$ is planar, meaning $\left\lvert E(G')\right\rvert\le 3n-6$ regardless of the ordering.

For any $e\in E(G)$, the probability of $e\in E(G')$ equals $\frac{1}{x(e)+1}$, as $e$ was chosen exactly if out of the $\left(x(e)+1\right)$-element set containing $e$ and the edges crossing $e$, $e$ comes first in the ordering. Thus, $\mathbb{E}\left(\left\lvert E(G')\right\rvert\right)=\sum\limits_{e\in E}{\frac{1}{x(e)+1}}$. This finishes the proof.
\end{proof}

Note that if $G$ is planar, then equality holds.

The name ``harmonic crossing lemma" refers to the fact that Lemma \ref{harmoniccrossinglemma} implies 
\[\frac{m}{3n-6}\le
\frac{m}{\sum\limits_{e\in E(G)}{\frac{1}{x(e)+1}}}\le\frac{\sum\limits_{e\in E(G)}{{x(e)+1}}}{m}\le
\frac{2cr(G)+m}{m}\]
where the middle inequality follows from the harmonic mean--arithmetic mean inequality for the numbers $x(e)+1$.
This gives $cr(G)\ge \frac{m^2}{6n-12}-\frac m2$, weaker than
Lemma \ref{newcrossinglemma}, but more widely applicable, as we will see.
The question naturally arises whether the upper bound in Lemma \ref{harmoniccrossinglemma} can be improved to something like $\frac{n^2}{m}$, but this is not the case, as any connected graph $G$ can be easily drawn such that all edges of a spanning tree $T$ of $G$ avoid all crossings.
Because of this $\sum\limits_{e\in E(G)}{\frac{1}{x(e)+1}}\ge \sum\limits_{e\in T}{\frac{1}{0+1}}=n-1.$
It is, however, likely that the constant 3 can be improved for large $m$.

Now we will show how to improve the constant of Theorem \ref{nonhomotopiccrossinglemma} using the Harmonic crossing lemma and Lemma 2 from \cite{ptt}, which states that any non-homotopic multigraph on $n$ vertices and no pairs of crossing edges has at most $3n-3$ edges. 

Now we will state an improved version of Theorem \ref{nonhomotopiccrossinglemma} based on the above:

\begin{theorem}
The crossing number of a non-homotopic topological multigraph $G$ with $n\ge 2$ 
vertices and $m$ edges satisfies $cr(G)\ge\frac{m^2}{6n-6}-\frac m2$.
\end{theorem}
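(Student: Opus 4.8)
The plan is to run exactly the argument behind the Harmonic crossing lemma (Lemma~\ref{harmoniccrossinglemma}), but with the planarity bound $3n-6$ replaced by the bound $3n-3$ for crossing-free non-homotopic multigraphs (Lemma~2 of \cite{ptt}).

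First I would fix a drawing of $G$ realizing $cr(G)$, and for each edge $e$ let $x(e)$ be the number of edges crossing $e$ in this drawing. Take a uniformly random linear order on $E(G)$ and let $G'$ be the spanning sub-multigraph of $G$ consisting of those edges $e$ that precede, in this order, every edge crossing them. By construction no two edges of $G'$ cross. Moreover $G'$ is a sub-multigraph of $G$ on the same vertex set, so any two parallel edges of $G'$ are parallel edges of $G$ and hence (as $G$ is non-homotopic) non-homotopic with respect to the same obstacle set; thus $G'$ is a non-homotopic topological multigraph with no crossing pair of edges, and Lemma~2 of \cite{ptt} gives $\lvert E(G')\rvert\le 3n-3$ for \emph{every} choice of the random order. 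Exactly as in the Caro--Wei-type computation in the proof of Lemma~\ref{harmoniccrossinglemma}, the probability that $e\in E(G')$ equals $\frac{1}{x(e)+1}$, since $e$ is kept precisely when it comes first among the $(x(e)+1)$-element set consisting of $e$ and the edges crossing it. Taking expectations, $\sum_{e\in E(G)}\frac{1}{x(e)+1}=\mathbb{E}\lvert E(G')\rvert\le 3n-3$.

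Then I would reuse verbatim the harmonic-mean--arithmetic-mean chain displayed after Lemma~\ref{harmoniccrossinglemma}:
\[\frac{m}{3n-3}\le\frac{m}{\sum_{e\in E(G)}\frac{1}{x(e)+1}}\le\frac{\sum_{e\in E(G)}\bigl(x(e)+1\bigr)}{m}\le\frac{2\,cr(G)+m}{m},\]
where the last inequality uses $\sum_{e}x(e)\le 2\,cr(G)$ (every crossing pair of edges produces at least one crossing point). Rearranging yields $cr(G)\ge\frac{m^2}{6n-6}-\frac m2$, which is the claim.

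I expect no serious obstacle here: the only points that need care are (i) verifying that $G'$ inherits the non-homotopic property so that Lemma~2 of \cite{ptt} applies, and (ii) noting that the bound $3n-3$ is the correct crossing-free edge count for $n\ge2$, which is precisely why the hypothesis $n\ge2$ is imposed. It is also worth remarking that, just as for the simple-graph consequence of Lemma~\ref{harmoniccrossinglemma}, this bound beats Theorem~\ref{nonhomotopiccrossinglemma} only once $m$ is large compared to $n$ (and is vacuous when the right-hand side is nonpositive).
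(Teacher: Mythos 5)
Your proposal matches the paper's proof essentially verbatim: both rerun the Caro--Wei argument from the harmonic crossing lemma with the planar bound $3n-6$ replaced by the bound $3n-3$ from Lemma~2 of \cite{ptt}, and then apply the same AM--HM chain. Your added remark that $G'$ inherits the non-homotopic property (so Lemma~2 of \cite{ptt} indeed applies) is a point the paper leaves implicit, but the argument is the same.
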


\begin{proof}
By using the same argument as in the proof of \ref{harmoniccrossinglemma}, we can prove that $\sum\limits_{e\in E(G)}{\frac{1}{x(e)+1}}\le3n-3$, using \cite[Lemma 2]{ptt}.
From the inequality between the arithmetic and the harmonic mean, \[\frac{m}{3n-3}\le
\frac{m}{\sum\limits_{e\in E(G)}{\frac{1}{x(e)+1}}}\le\frac{\sum\limits_{e\in E(G)}{{x(e)+1}}}{m}\le
\frac{2cr(G)+m}{m}\]
which implies the statement.
\end{proof}

\section{Proof of Theorem \ref{thm:main}}\label{sec:proof}

Fix a UDG on $n$ vertices with $u(n)$ edges and the images of the vertices of one of its planar realizations; denote these $n$ points by $P$ and the UDG by $G$.
In the following, we do not differentiate between vertices and their images.
Note that due to the maximality of $G$, any two points at unit distance form an edge.

The statement is trivial for $n\le 2$, so we only have to prove it for $n\ge3$.

First, suppose that all vertices of $G$ have degree at least $3$. Now, similarly to Sz\'ekely \cite{sz1}, we can draw unit circles around all the vertices of $G$. Note that whenever two points are unit distance apart, their circles will be incident to one another. Divide the circles into circular arcs with the points of $P$ that fall on them. This way we obtain exactly $\sum\limits_{v\in G}{deg_G(v)}=2u(n)$ circular arcs.

Define a graph $H$ whose vertex set is the same as $G$'s, and its edges are these circular arcs.
For any pair of vertices there are at most two circles incident to both of them. Also, on a circle, any pair from $P$ is connected by at most one arc from $E(H)$ on the same circle, as at least $3$ vertices fall on each circle. Thus, the multiplicity of any edge is at most $2$.
So $H$ is a graph with $n$ vertices and $2u(n)$ edges, with edge multiplicity at most $2$.
In the above described drawing of $H$, there are at most $2\cdot\binom{n}{2}=n^2-n$ crossings, since all pairs of circles cross at most twice.

Now we have three cases:

1) If $u(n)\ge 6.95n$, then from Corollary \ref{multigraph3}, we get that $cr(H)\ge\frac{8u(n)^3}{58n^2}=\frac{4}{29}\cdot\frac{u(n)^3}{n^2}$, and thus $\frac{4}{29}\cdot\frac{u(n)^3}{n^2}\le n^2\Rightarrow u(n)\le\sqrt[3]{\frac{29}{4}}\cdot n^{4/3}$.

2) If $u(n)<6.95n$ and $n\ge47$, then $6.95n<\sqrt[3]{\frac{29}{4}}\cdot n^{4/3}$, so the statement is true.

3) If $n\le380$, then we can lower bound the intersections among these unit circles in the following way.
For a vertex $v$ with degree $deg(v)$, there are exactly $\left({deg(v)}\atop\vphantom{|}2\right)$ pairs of circles that intersect in $v$.
Therefore, we have $n^2-n\ge \sum_v \left({deg(v)}\atop\vphantom{|}2\right)\ge n\cdot\left({\sum_v deg(v)/n}\atop\vphantom{|}2\right)= u(n)\left(\frac{2u(n)}n-1\right)$ by Jensen's inequality.
An elementary calculation gives $u(n)<\sqrt[3]{\frac{29}{4}}\cdot n^{4/3}$ when $n\le 380$.\footnote{Note that in the later parts of our proof we could also reduce the upper bound on $cr(H)$ by $u(n)(\frac{2u(n)}n-1)$ using this argument, but it would not change its order of magnitude or effect the constant we obtain.}
Moreover, quite surprisingly, this simple bound (combined with a linear lower bound for the number of crossings in $H$) beats the best previous bound for small $n$ starting from $n=25$.
(These values can be found in Table 1.)

So the only case when $G$ can have more than $\sqrt[3]{\frac{29}{4}}\cdot n^{4/3}$ edges is if the assumption that all vertices have degree at least $3$ is false. Now suppose that $G$ has the smallest number of vertices among those UDGs that do not satisfy the upper bound. Then by removing a vertex with degree at most $2$, we can get a UDG with $n-1$ vertices and at least $u(n)-2$ edges. But since $\sqrt[3]{\frac{29}{4}}\cdot n^{4/3}-\sqrt[3]{\frac{29}{4}}\cdot (n-1)^{4/3}>2$ for $n\ge 3$,
this would mean that the obtained UDG on $n-1$ vertices also does not satisfy the upper bound, contradicting the assumption that $G$ was the smallest such UDG.

This finishes the proof of Theorem \ref{thm:main}.

\section{Best bounds for $n\le 30$}\label{sec:smalln}

In Table 1 we list the best known bounds, along with constructions.
For $n\le 14$, the exact values of $u(n)$ were known, established in the thesis of Schade \cite{sch}, while $u(15)=37$ is our contribution (see Section \ref{sec:1537}).

\begin{center}
\begin{tabular}[t]{ | c | c | c | }
\hline
$n$&$u(n)$&Lower bounding graph(s)\\
\hline
1&0&\begin{tikzpicture}[scale=0.7,line cap=round,line join=round]
\clip(-1.16,-0.22) rectangle (-0.85,0.14);
\begin{scriptsize}
\fill [color=black] (-1,0) circle (1.5pt);
\end{scriptsize}
\end{tikzpicture}\\
\hline
2&1&\begin{tikzpicture}[scale=0.7,line cap=round,line join=round]
\clip(-1.13,-0.31) rectangle (0.22,0.23);
\draw (-1,0)-- (0,0);
\begin{scriptsize}
\fill [color=black] (0,0) circle (1.5pt);
\fill [color=black] (-1,0) circle (1.5pt);
\end{scriptsize}
\end{tikzpicture}\\
\hline
3&3&\begin{tikzpicture}[scale=0.7,line cap=round,line join=round]
\clip(-1.11,-0.16) rectangle (0.11,0.99);
\draw (-0.5,0.87)-- (-1,0);
\draw (-0.5,0.87)-- (0,0);
\draw (-1,0)-- (0,0);
\begin{scriptsize}
\fill [color=black] (0,0) circle (1.5pt);
\fill [color=black] (-1,0) circle (1.5pt);
\fill [color=black] (-0.5,0.87) circle (1.5pt);
\end{scriptsize}
\end{tikzpicture}\\
\hline
4&$5^{*}$&\input{Figures/G4}\\
\hline
5&$7^{*}$&\input{Figures/G5}\\
\hline
6&$9^{*}$&\input{Figures/G6-1}\input{Figures/G6-2}\input{Figures/G6-3}\input{Figures/G6-4}\\
\hline
7&12&\input{Figures/G7}\\
\hline
8&1$4^{*}$&\input{Figures/G8-1}\input{Figures/G8-2}\input{Figures/G8-3}\\
\hline
9&18&\input{Figures/G9}\\
\hline
10&$20^{*}$&\input{Figures/G10}\\
\hline
11&$23^{*}$&\input{Figures/G11-1}\input{Figures/G11-2}\\
\hline

\end{tabular}
\end{center}

\begin{center}
\begin{tabular}{ | c | c | c | }
\hline
$n$&$u(n)$&Lower bounding graph(s)\\
\hline
12&27&\input{Figures/G12}\\
\hline
13&$30^{*}$&\input{Figures/G13}\\
\Xhline{5\arrayrulewidth}
14&$33^{*}$&\input{Figures/G14-1}\input{Figures/G14-2}\\
\hline
15&{\bf 37}&\input{Figures/G15}\\
\hline
16&41 or $42^{*}$&\input{Figures/G16}\\
\hline
17&43--47&\input{Figures/G17}\\
\hline
18&46--52&\input{Figures/G18}\\
\hline
19&50--$57^{*}$&\input{Figures/G19-1}\input{Figures/G19-2}\\
\hline
\end{tabular}
\end{center}

\begin{center}
\begin{tabular}{ | c | c | c | }
\hline
$n$&$u(n)$&Lower bounding graph(s)\\
\hline
20&54--63&\input{Figures/G20}\\
\hline
21&57--$68^{*}$&\input{Figures/G21}\\
\hline
22&60--{\bf 72}&\input{Figures/G22}\\
\hline
23&64--{\bf 77}&\input{Figures/G23}\\
\hline
24&68--{\bf 82}&\input{Figures/G24}\\
\hline
25&72--{\bf 87}&\input{Figures/G25-1}\input{Figures/G25-2}\\
\hline
26&76--{\bf 92}&\input{Figures/G26}\\
\hline
\end{tabular}
\end{center}

\begin{center}
\begin{tabular}{ | c | c | c | }
\hline
$n$&$u(n)$&Lower bounding graph(s)\\
\hline
27&81--{\bf 97}&\input{Figures/G27}\\
\hline
28&85-{\bf 102}&\input{Figures/G28-1}\input{Figures/G28-2}\\
\hline
29&{\bf 89--108}&\input{Figures/G29}\\
\hline
30&{\bf 93--113}&\input{Figures/G30}\\
\hline
\end{tabular}
\bigskip
\label{table1} \captionsetup{font=scriptsize}
\captionof{table}{Bounds for the maximal number of unit distances $u(n)$ among $n$ planar points.}
\end{center}

For $n\le 13$, the drawn graphs are known to be the only maximal UDGs.\footnote{Note that in \cite{bmp} it is incorrectly stated also for $n=14$ that the constructions were proved to be unique.}
In general, the upper bounds can be obtained using the inequality $u(n)\le \lfloor{n\over n-2}\cdot u(n-1)\rfloor$ (for $n\ge 3$), which is true because the edge density of the maximal UDGs with $n$ vertices is monotonically decreasing in $n$ as all subgraphs of a UDG are also UDGs.
This was observed by Schade, who sometimes also applied additional tricks---the values where such tricks are needed are denoted by a star.

For $n\ge 15$, the graphs that attain the lower bounds are also by Schade, with the exception of the ones for $n=29,30$, and the second graphs for $n=19,28$, which are our constructions based on graphs by Schade. The upper bounding values from $n\ge 22$ are also our improvements (the improved values are marked in bold), derived from the following refinement of the inequality $n^2-n\ge \sum_v \left({deg(v)}\atop\vphantom{|}2\right)$.

\begin{proposition}\label{prop:improvedbound}
If $G$ is a unit distance graph with $u(n)$ edges, then
\[
    n^2-n \ge 4\cdot|E(G)|-12n+24+\sum_{v\in V(G)} \left({deg(v)}\atop\vphantom{|}2\right)\]
    \[\ge 4\cdot u(n)-12n+24+n\cdot\left(1-\left\lbrace{\frac{2u(n)}{n}}\right\rbrace\right)\cdot\left({\left\lfloor\frac{2u(n)}{n}\right\rfloor}\atop\vphantom{|}{2}\right)+n\cdot\left\lbrace{\frac{2u(n)}{n}}\right\rbrace\cdot\left({\left\lceil\frac{2u(n)}{n}\right\rceil}\atop\vphantom{|}{2}\right).
\]

\end{proposition}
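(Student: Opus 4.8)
The plan is to refine the proof of Theorem~\ref{thm:main} by \emph{keeping}, rather than discarding, the crossing-number lower bound on the arc-multigraph and adding it to the vertex-degree count. As in that proof I would first assume that every vertex of $G$ has degree at least $3$ (the remaining case being handled as at the end of the proof of Theorem~\ref{thm:main}, or absorbed into the monotone edge-density argument used for the table), draw the $n$ unit circles centred at the vertices of $G$, and record the easy upper bound: any two circles meet in at most two points, so the number of intersection points of the circles, counted with multiplicity over pairs of circles, is at most $2\binom n2=n^2-n$.

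Next I would split these intersection points into those that are vertices of $G$ and those that are not, and count each class. Exactly $\deg(p)$ circles pass through a vertex $p$ — namely the circles centred at the neighbours of $p$ — so the vertices account for exactly $\sum_{v\in V(G)}\binom{\deg(v)}{2}$ pairs, with no over-counting because distinct vertices are distinct points. For the remaining points, let $H$ be the multigraph whose edges are the circular arcs cut out on the circles by the points of $P$, drawn as the circles prescribe; as in the proof of Theorem~\ref{thm:main}, $H$ has $n$ vertices and $2|E(G)|$ edges, and since at least three points of $P$ lie on each circle and at most two circles pass through any given pair of vertices, $H$ has edge multiplicity at most $2$. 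Two arcs of one circle meet only at shared endpoints, and arc interiors avoid $P$, so every non-vertex intersection point of the circles is a crossing of two arcs of distinct circles; hence these points are (after a generic perturbation of any multiple intersection point) the crossings of a drawing of $H$, and in particular their number is at least $cr(H)$. As $2|E(G)|$ is even, Corollary~\ref{crossing2} gives $cr(H)\ge 2\cdot 2|E(G)|-12n+24$. Adding the two classes, $n^2-n\ge\sum_{v\in V(G)}\binom{\deg(v)}{2}+4|E(G)|-12n+24$, which with $|E(G)|=u(n)$ is the first displayed inequality.

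For the second inequality I would use only $\sum_{v\in V(G)}\deg(v)=2u(n)$ together with the convexity of $x\mapsto\binom x2$: among all $n$-term sequences of non-negative integers with this sum, $\sum_v\binom{\deg(v)}{2}$ is minimised by the most balanced one, in which $n\bigl(1-\{2u(n)/n\}\bigr)$ of the degrees equal $\lfloor 2u(n)/n\rfloor$ and $n\{2u(n)/n\}$ of them equal $\lceil 2u(n)/n\rceil$, because replacing two degrees $d_i\ge d_j+2$ by $d_i-1,\,d_j+1$ strictly decreases $\sum_k\binom{d_k}{2}$; substituting this lower bound into the first inequality gives the second. The main thing to be careful about is the multiplicity bound for $H$: a vertex of degree $2$ already contributes a parallel pair of arcs from its own circle, and a second circle through the same two vertices would push the multiplicity to $3$ or more, so the reduction to minimum degree at least $3$ is exactly what makes Corollary~\ref{crossing2} applicable; the rest is routine bookkeeping of the two contributions to the at most $n^2-n$ circle intersections.
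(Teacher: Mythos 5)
Your proof is correct and follows the same route as the paper: split the at most $n^2-n$ pairwise circle intersections into those at vertices (exactly $\sum_v\binom{\deg(v)}{2}$) and those contributing crossings of the arc-multigraph $H$, apply Corollary~\ref{crossing2} to $H$ with $m=2|E(G)|$, and finish with the discrete Jensen/balancing argument. The paper handles the low-degree reduction via the footnote that any vertex of degree at most two gives $u(n)\le u(n-1)+2$, which matches your ``absorbed into the monotone edge-density argument'' alternative, and you correctly flag the need for minimum degree $3$ to guarantee multiplicity at most $2$ in $H$.
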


\begin{proof}
Recall that $n^2-n\ge cr(H)+\sum_{v\in V(G)} {\left({deg_G(v)}\atop\vphantom{|}2\right)}$ where $H$ is the graph obtained from $G$, as described in Section \ref{sec:proof}, with edges of multiplicity at most two.\footnote{We can again suppose that $G$ has no vertex of degree two, as in that case we would have $u(n)\le u(n-1)+2$, which would give a better upper bound for each $n>10$ in the table.}

For non-negative integers $n_1,...,n_k$ with a fixed sum, $\sum_{v\in V(G)}{\left(n_i\atop\vphantom{|}2\right)}$ is minimal if $\max_{1\le i,j\le k}\left({n_i-n_j}\right)\le 1$ (the proof is straightforward: $n_i-n_j\ge 2$ would mean $\left({n_i}\atop\vphantom{|}2\right)+\left({n_j}\atop\vphantom{|}2\right)>\left({n_i}\atop\vphantom{|}2\right)-\left(n_i-1\right)+\left({n_j}\atop\vphantom{|}2\right)+n_j=\left({n_i-1}\atop\vphantom{|}2\right)+\left({n_j+1}\atop\vphantom{|}2\right)$, a contradiction).

Since $\sum_{v\in V(G)}{{deg_G(v)}}=2u(n)$, we obtain $\sum_{v\in V(G)}{\left({deg_G(v)}\atop\vphantom{|}2\right)}\ge n\cdot\left(1-\left\lbrace{\frac{2u(n)}{n}}\right\rbrace\right)\cdot\left({\left\lfloor\frac{2u(n)}{n}\right\rfloor}\atop\vphantom{|}{2}\right)+n\cdot\left\lbrace{\frac{2u(n)}{n}}\right\rbrace\cdot\left({\left\lceil\frac{2u(n)}{n}\right\rceil}\atop\vphantom{|}{2}\right)$ using the same discrete version of Jensen's inequality as in the last paragraph of the proof of Lemma \ref{prttmultigraphcrossinglemma} and applying Corollary \ref{crossing2} to $H$ finishes the proof of the statement of Proposition \ref{prop:improvedbound}.
\end{proof}

As can be seen, the upper bounds diverge quite fast from the lower bounds.
For $n\ge521$, Theorem \ref{thm:main} gives the best upper bound.

\section{On 15 vertices 37 edges is best}\label{sec:1537}

We prove that $u(15)=37$, i.e., among 15 points in the plane, there can be at most 37 unit distances.
The lower bound follows from the construction found by Schade \cite{sch}, see Table 1.
The proof of the upper bound will follow from a straightforward case analysis.
Let us assume that $G_{15}$ is a UDG with 15 vertices and at least 38 edges.

\begin{proposition}
    $G_{15}$ has $38$ edges, $14$ degree-$5$ vertices and $1$ degree-$6$ vertex.
\end{proposition}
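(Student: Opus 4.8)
The plan is to argue that $G_{15}$ cannot have more than $38$ edges, and that if it has exactly $38$ the degree sequence is forced. First I would recall the two standing assumptions available for UDGs: by maximality every pair of points at unit distance is an edge, and (crucially for the edge-count bound) we may assume $G_{15}$ has minimum degree at least $3$, since a vertex of degree $\le 2$ could be deleted to produce a UDG on $14$ vertices with $\ge 36$ edges, contradicting $u(14)=33$ from Table 1. So from now on all degrees lie in $\{3,4,5,6,7\}$ — in fact $\le 7$ because $u(n)\le O(n^{3/2})$ locally, but more simply because a vertex of degree $d$ forces $d$ other points on a unit circle, and the classical bound on unit distances in small configurations caps this; I would state the cap $\deg(v)\le 7$ cleanly, or if $7$ is hard to exclude immediately, carry it along and kill it with the crossing inequality below.

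**The main inequality.**

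The engine is the inequality from the proof of Theorem~\ref{thm:main} and Proposition~\ref{prop:improvedbound}: drawing unit circles around the $n=15$ vertices gives a multigraph $H$ with $n$ vertices, $2|E(G_{15})|$ edges, edge-multiplicity $\le 2$, and at most $n^2-n = 210$ crossings, so
\[
210 \;\ge\; cr(H) + \sum_{v} \binom{\deg(v)}{2} \;\ge\; \bigl(2|E|-12n+24\bigr) + \sum_v \binom{\deg(v)}{2},
\]
using Corollary~\ref{crossing2} ($cr(H)\ge 2m-12n+24$ with $m=2|E|$ even) applied to $H$. With $n=15$ this reads $210 \ge 4|E| - 156 + \sum_v\binom{\deg(v)}{2}$, i.e.
\[
\sum_{v}\binom{\deg(v)}{2} \;\le\; 366 - 4|E|.
\]
Now I plug in $\sum_v \deg(v) = 2|E|$ and use convexity (the discrete Jensen bound from Proposition~\ref{prop:improvedbound}): among degree sequences on $15$ vertices with a fixed sum $2|E|$, $\sum\binom{\deg(v)}{2}$ is minimized when the degrees differ by at most $1$. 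For $|E|=38$ the sum is $76$, so the most balanced split is fourteen $5$'s and one $6$ (sum $76$), giving $\sum\binom{\deg(v)}{2} = 14\binom52 + \binom62 = 140+15 = 155$, and $366-4\cdot 38 = 214 \ge 155$ — consistent, so $38$ edges is not yet excluded. For $|E|=39$: $2|E|=78$, balanced split is twelve $5$'s and three $6$'s ($60+18=78$), $\sum\binom{\deg(v)}{2} = 12\cdot10 + 3\cdot15 = 165$, while $366-4\cdot39 = 210 < 165$? No: $210 > 165$, so I need a sharper input — this is the expected obstacle (see below). I would instead use Corollary~\ref{multigraph2}/\ref{multigraph3} only if $m$ is large enough, which it is not here ($2\cdot39 = 78 < 13.9\cdot15$), so the right tool is the exact small-$n$ crossing bound $cr(n,s)\ge s-(3n-6)$ fed through Lemma~\ref{multigraphcrossinglemma}(2), possibly combined with a better planarity-type bound for $H$ exploiting that $H$ is drawn with circular arcs.

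**Pinning down the degree sequence and the expected obstacle.**

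Assuming the inequality is made to bite at $|E|\ge 39$ (the genuinely delicate part, requiring either a stronger lower bound on $cr(H)$ for this specific arc-drawing or a direct combinatorial argument that $15$ points cannot realize $39$ unit distances), we get $|E(G_{15})| = 38$. Then the constraint $\sum_v\binom{\deg(v)}{2}\le 214$ together with $\sum_v\deg(v)=76$, $\deg(v)\in\{3,\dots,7\}$, and the convexity lower bound $\ge 155$ must be combined with an \emph{upper} refinement: any imbalance in the degree sequence strictly increases $\sum\binom{\deg(v)}{2}$, but that alone only rules out sequences pushing past $214$. To force exactly "fourteen $5$'s and one $6$", I would additionally observe that a vertex of degree $\ge 6$ is expensive, and a vertex of degree $\le 4$ must be compensated by another of degree $\ge 6$ (since the average is $76/15 > 5$), and then check the small number of surviving candidate sequences — e.g. (thirteen $5$'s, one $4$, one $7$) gives $13\cdot10+6+21 = 157$, still $\le 214$, so the crossing bound alone does \emph{not} finish it. Hence the main obstacle is exactly here: one must invoke a separate structural fact about unit distance graphs — for instance that a degree-$7$ vertex forces a regular hexagon plus center among its neighbours, which together with the ambient $15$-point constraint and $38$ edges is impossible — to eliminate all degree sequences except $5^{14}6^1$. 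I expect the paper to handle this via a short case analysis on the maximum degree $\Delta(G_{15})$, ruling out $\Delta\ge 7$ by the hexagon-with-center rigidity and $\Delta\le 4$ by degree-sum parity, leaving only $\Delta\in\{5,6\}$ and then pinning the sequence to $5^{14}6^1$ by the edge count $|E|=38$. That rigidity input is the step I would most carefully verify.
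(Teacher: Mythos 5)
Your proposal contains two genuine gaps, and you yourself flagged the first one. You correctly computed that the crossing inequality $\sum_v \binom{\deg(v)}{2} \le 366 - 4|E|$ does \emph{not} rule out $|E|=39$ (it gives $165 \le 210$, which is consistent), and you correctly noticed that it also does not pin down the degree sequence (sequences like $5^{13}4^1 7^1$ survive). Your proposed repairs — a sharper crossing bound for the arc drawing, or a hexagon-with-center rigidity argument for $\Delta\ge 7$ — are not what the paper does and are not needed.

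The paper's argument is far more elementary and avoids crossing numbers entirely. For the edge bound it uses the edge-density inequality $u(n)\le \lfloor \frac{n}{n-2}u(n-1)\rfloor$ (stated in Section 5: some vertex has degree $\le \lfloor 2u(n)/n\rfloor$, so deleting it shows $u(n-1)\ge u(n)\frac{n-2}{n}$); with $u(14)=33$ this gives $u(15)\le\lfloor\frac{15}{13}\cdot 33\rfloor=38$, so $G_{15}$ has exactly $38$ edges. For the degree sequence you had the right seed of an idea but stopped too early: you argued only $\delta(G_{15})\ge 3$ by deleting a degree-$\le 2$ vertex, but the same deletion argument pushes to $\delta(G_{15})\ge 5$, because removing a degree-$\le 4$ vertex leaves a UDG on $14$ vertices with $\ge 34 > u(14)$ edges. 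Once every degree is $\ge 5$ and the degree sum is $76 = 14\cdot 5 + 6$, the sequence $5^{14}6^{1}$ is forced with no further casework. So the missing ideas are (i) the edge-density recursion from $u(14)$ in place of the crossing inequality, and (ii) strengthening your vertex-deletion step from ruling out degree $\le 2$ to ruling out degree $\le 4$.
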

\begin{proof}
    The bound on the number of edges follows from considering the edge densities: $u(15)\le \lfloor{15\over 13}\cdot u(14)\rfloor<39$.
    
    If $G_{15}$ had a vertex whose degree is at most 4, then deleting this vertex would leave a UDG on $14$ vertices with at least $34$ edges, contradicting $u(14)=33$.
    But since the sum of the degrees is $76$, this is only possible as $76=14\cdot 5+6$.
\end{proof}

We will make the following observation:

Observation 1. \emph{In such a graph, all pairs of vertices with distance $1$ form an edge.}
\begin{proof}
Otherwise there would be a UDG with $15$ vertices and $39$ edges, which contadicts $u(15)<39$.
\end{proof}

The rest of the proof will not build on Schade's results.

Denote the degree-6 vertex of $G_{15}$ by $o$, the set of its $6$ neighbors by $N$ and the remaining 8 vertices by $R$. Now the following observations hold:

Observation 2. \emph{Any vertex from $R$ can have at most two neighbors from $N$.}
\begin{proof}
Otherwise along with $o$ they would form a $K_{2,3}$, which is not a UDG.
\end{proof}

This implies the following.

Observation 3. \emph{There are at most $16$ edges between $R$ and $N$.}

Observation 4. \emph{Any pair of vertices from $N$ can have at most one vertex from $R$ as their common neighbor.}
\begin{proof}
Otherwise they would form a $K_{2,3}$ along with $o$.
The sum of the degrees of the vertices in $N$ is $30$. There are 6 edges that go from $N$ to $o$ and from Observation 3, at most $16$ that go from $N$ to $R$.
\end{proof}

This implies the following.

Observation 5. \emph{There are at least $\frac{30-6-16}2=4$ edges among the vertices of $N$.}

We denote the neighborhood graph of $o$ by $G[N]$. Similarly to the above,

Observation 6. \emph{The number of edges between $R$ and $N$ is $30-6-2\left\lvert E(G[N])\right\rvert$.}

Since all the points of $N$ lie on a unit circle, this means that all degrees in $G[N]$ are at most $2$, so from Observation 5, $4\le \left\lvert E(G[N])\right\rvert\le 6$. But $5$ edges are not possible either: this could occur only as a $P_6$ (where we denote by $P_i$ the path on $i$ vertices) but it would imply that the vertices in $N$ form a regular hexagon with side-lengths $1$, thus forming $6$ edges because of Observation 1.
This leaves just $4$ options for what $G[N]$ can be: $C_6, P_5~\dot\cup~P_1, P_4~\dot\cup~P_2$, or $P_3~\dot\cup~P_3$.

Combining Observations 2 and 6, we get the following.

Observation 7. \emph{If $G[N]$ has 4 edges, then each vertex from $R$ has exactly two neighbors from $N$.}

We call the two edges leading from an $r\in R$ to $N$ a \emph{cherry}.
Recall that there can be at most one cherry on any two points of $N$, otherwise we would have a $K_{2,3}$ with $o$ and the other two vertices of the cherries.
The rest of the proof is a case analysis based on what $G[N]$ is.\\

Case $G[N]=C_6$: (See Figure \ref{15-38-C6} left.) Denote the vertices of $G[N]$ in circular order by $v_1,\ldots, v_6$. From Observation 6, there are $12$ edges from $N$ to $R$, so by the pigeonhole principle there are two vertices in $N$ that have a common neighbor. These must be adjacent, so without loss of generality suppose $r\in R$ is adjacent to $v_1$ and $v_2$. This $r$ cannot be adjacent to any of $v_3,\ldots,v_6$, moreover, it cannot even have a common neighbor with any of them (apart from $v_1$ and $v_2$): its distance from $v_3$ and $v_6$ is $2$, so $r$ can have at most one common neighbor with them, which is $v_2$ and $v_1$, respectively, while from $v_4$ and $v_5$, $r$ has distance more than $2$, thus it cannot have any common neighbors with them. And since $v_3$ has at most one common neighbor from $R$ with $v_4$ and no common neighbor from $R$ with $v_5$ or $v_6$, the $8$ edges connecting $\left\lbrace v_3,...,v_6\right\rbrace$ with $R$ belong to at least $5$ vertices from $R$. This leaves only $2$ potential neighbors for $r$ from $R$, contradicting that its degree is $5$.\\

\begin{center}
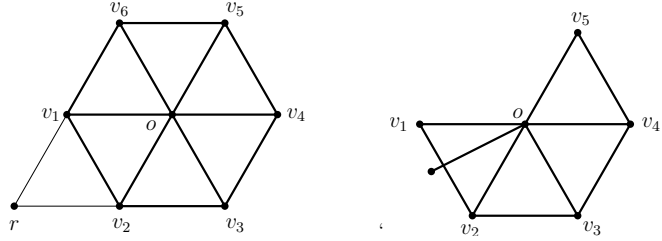

\scalebox{0.7}{\input{Figures/15-38-C6}~~~~~~~~~~~`~\input{Figures/15-38-P5-P1}}
\captionof{figure}{$G[N]=C_6$ to the left and $G[N]=P_5~\dot\cup~P_1$ to the right. Edges of $G[N]$ are denoted by thick, the edges between $N$ and $R$ are denoted by thin lines.}\label{15-38-C6}
\label{15-38-P5-P1}
\end{center}

Case $G[N]=P_5~\dot\cup~P_1$: (See Figure \ref{15-38-P5-P1} right.) Denote the vertices of the path $P_5$ in order by $v_1,\ldots, v_5$. Vertex $v_1$ has three neighbors from $R$, and from Observation 7, each of these has exactly one other neighbor from $N$. Therefore $v_1$ is the endpoint of 3 cherries, but it cannot be in a cherry with any of $v_3,v_4,v_5$ (the third vertex of the cherry with $v_5$ would be adjacent to $o$), a contradiction.\\

\begin{center}
\scalebox{0.75}{\input{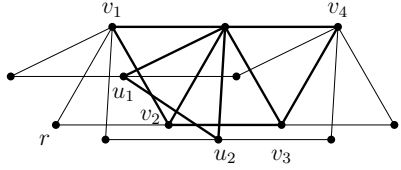}}
\captionof{figure}{$G[N]=P_4~\dot\cup~P_2$ is denoted by thick lines, the edges between $N$ and $R$ are denoted by thin lines.}
\label{15-38-P4-P2}
\end{center}

Case $G[N]=P_4~\dot\cup~P_2$: (See Figure \ref{15-38-P4-P2}.) Denote the vertices of $P_4$ in order by $v_1,\ldots, v_4$ and the vertices of $P_2$ by $u_1,u_2$. By Observation 7, vertex $v_1$ should be the endpoint of 3 cherries, but it cannot be in a cherry with any of $v_3,v_4$, so it forms a cherry with each of $v_2,u_1,u_2$. Similarly, $v_4$ forms a cherry with each of $v_3,u_1,u_2$. Denote by $r$ the common neighbor of $v_1$ and $v_2$ from $R$. It is easy to check that $r$ cannot be adjacent to any of the other 5 points of $R$ forming a cherry defined above: the neighbors of $v_4$ are too far from $r$, while the point at unit distance from $v_1$ and $r$, that is not $v_2$, is too far from the other vertices in $N$. But this leaves only 2 potential neighbors for $r$ from $R$, contradicting that its degree is 5.\\

Case $G[N]=P_3~\dot\cup~P_3$: (See Figure \ref{fig:P3P3}.) Denote the vertices of the $P_3$'s in order by $v_1,v_2,v_3$ and $u_1,u_2,u_3$. For all $(i,j)\in\lbrace (1,2),(2,3)\rbrace$ call the common neighbor of $v_i$ and $v_j$, other than $o$ (if exists), $v_{ij}$, and the common neighbor of $u_i$ and $u_j$, other than $o$ (if exists), other than $o$ $u_{ij}$. Also, for all $i,j\in\lbrace 1,2,3\rbrace$ call the common neighbor of $v_i$ and $u_j$, $w_{ij}$ (if exists). From Observation 7, all the vertices of $R$ are neighboring two vertices of $N$, and $v_1$ and $v_3$ do not have a common neighbor outside $o\cup N$, similarly to $u_1$ and $u_3$, so all the vertices of $R$ are from the above defined points.

\begin{center}
\scalebox{0.8}{\input{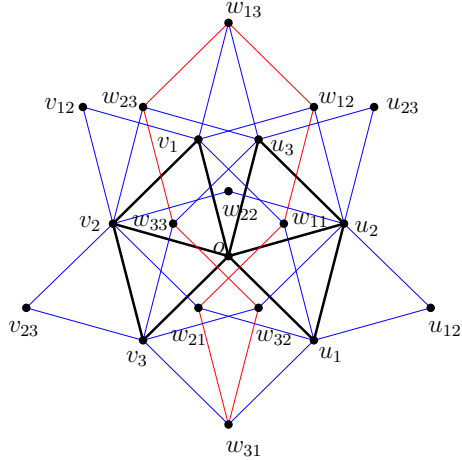}}
\captionof{figure}{$G[N]$ is denoted by thick black lines, the edges defining the $v_{ij}$'s, $u_{ij}$'s and $w_{ij}$'s are denoted by blue lines and $C$ is denoted by red lines.}
\label{15-38-P3-P3}\label{fig:P3P3}
\end{center}

Now suppose that $v_{12}$ and $w_{ij}$ are neighbors in $G_{15}$ for some $i,j\in\lbrace 1,2,3\rbrace$. Then $i$ cannot be $3$, as then $w_{ij}$ would be a common neighbor of $v_{12}$ and $v_3$, but their only common neighbor is $v_2$. If $i=1$ or $2$, then again $w_{ij}$ is the common neighbor of $v_i$ and $v_{12}$ and has distance $2$ from $o$, thus only touching the unit circle around it in $v_i$, which contradicts to it neighboring $u_j$. So $v_{12}$ cannot have any $w_{ij}$ as a neighbor. Applying the same argument to $v_{23}$, $u_{12}$ and $u_{23}$, we get that if any of them is inside $R$, then they could only be each others neighbors, and since $R$ is $3$-regular, they all exist and are all neighboring, thus forming a $K_4$, which is not a UDG, thus leading to a contradiction. So none of these four points are in $R$.

So $R$ consists of exactly $8$ of the $9$ $w_{ij}$'s and since the degree of $v_2$ and $u_2$ towards $R$ is $2$, while for all the other vertices, it is $3$, $w_{22}$ is the one missing. There is a cycle $C$ through all the points of $R$ (see Figure \ref{15-38-P3-P3}),  in the order $w_{11}, w_{12}, w_{13}, w_{23}, w_{33}, w_{32}$, $w_{31}$, $w_{21}$, by using the equalities $\overrightarrow{ow_{ij}}=\overrightarrow{ov_i}+\overrightarrow{ou_j}$. For example, $\overrightarrow{w_{11}w_{12}}=\overrightarrow{ou_1}-\overrightarrow{ou_2}=\overrightarrow{u_1u_2}$, which is a unit vector.

Every vertex in $R$ has exactly one more neighbor in $R$, these give $4$ diagonals of $C$, forming a perfect matching among its vertices. As $\left|w_{11}w_{13}\right|=\left|w_{13}w_{33}\right|=\left|w_{33}w_{31}\right|=\left|w_{31}w_{11}\right|=\sqrt{3}$, the diagonals starting from $w_{11}$, $w_{13}$, $w_{33}$ and $w_{31}$ must connect them with their $3$rd or $4$th neighbors. Since $w_{11}w_{13}w_{33}w_{31}$ forms a rhombus of side length $\sqrt{3}$, at most one of its diagonals can have length $1$. So we can suppose without loss of generality that $w_{13}$ is connected to $w_{32}$.
But then $w_{13}$, $w_{33}$, $w_{23}$, $u_3$ and $w_{32}$ form a $K_{2,3}$, a contradiction.\\

This finishes the proof of all cases.

\begin{remark}
    With similar methods one can show that if there is a UDG on 16 vertices with 42 edges, then it cannot have a vertex whose degree is at least 7.
    As $u(15)=37$, this implies that such a graph would contain 12 vertices of degree 5 and 4 vertices of degree 6.
\end{remark}

\section{Conclusion}\label{sec:conc}

We have improved the best known bound for the number of unit distances among $n$ points in the plane with a constant factor.
Though we have not explicitly stated, our proofs also work for bounding the number of unit distances among $n$ points on a sphere, as does the crossing lemma.
In case of general spheres the best lower bound for the number of unit distances is $\Omega(n\sqrt{\log n})$ \cite{swv}, while for the sphere of radius $1/\sqrt 2$ the lower bound is $\Omega(n^{4/3})$ \cite{ehp}.
This sphere is special because the distance of two points of the sphere is 1 if and only if the vectors from the center to them are perpendicular to each other.
Note that for this radius, and no other, two unit circles around antipodal points coincide, and there are more than two unit circles through a pair of antipodal points.
So for $1/\sqrt 2$ our proof only works with a slight modification, but with the same constant. If we also suppose that no two vertices are antipodal, then we get an even better bound, $\frac{\sqrt[3]{29}}{2}\cdot n^{4/3}+O(n)$, which is less than $1.54n^{4/3}+O(n)$.

The construction of \cite{ehp} is based on converting a set of $n$ points and $n$ lines in the plane with $I$ incidences to a set of $2n$ points on the sphere (none of them antipodal) with $I$ unit distances.
It was proved by Szemer\'edi and Trotter \cite{SzT} that the maximum of $I$ is $O(n^{4/3})$ and this is sharp.
The best known constant factor bounds are $0.42n^{4/3}< I < 2.5n^{4/3}$ by \cite{pt} and \cite{prtt}, respectively.\footnote{In fact, the upper bound since \cite{prtt} was reduced to $2.44n^{4/3}$ due to the improvement \cite{a} of the crossing lemma.}
From our proof and the above argument of \cite{ehp}, we only get a weaker upper bound for $I$.
But the lower bound $0.42n^{4/3}$ also shows the limitation of our method.

It would be natural to look at the same problem in the space.
This was also first studied by Erd\H os \cite{erdos3d}, who proved the bounds $\Omega(n^{4/3}\log\log n)$ and $O(n^{5/3})$, by using the grid as a construction and by noticing that $K_{3,3}$ is forbidden, while the best current upper bound is $O(n^{3/2})$ \cite{kmss,z}.

Finally, to make progress towards a $o(n^{4/3})$ bound, it would be interesting to study the extremal Tur\'an-number of UDG.
Denote by $\mathrm{ex}_u(n,F)$ the number of edges a UDG on $n$ vertices can have without a subgraph isomorphic to a graph $F$. What do we know about this function for various $F$? The only result we are aware of is the simple $\mathrm{ex}_u(n,F)=\Theta(u(n))$ if $\chi(F)>2$ and $\mathrm{ex}_u(n,C_4)\ge n^{1+c/\log\log n}$.\footnote{Zolt\'an L. Nagy, personal communication 2020.}
Another natural question is to study the possible number of occurrences of some fixed graph $F$ in a UDG.
Denoting this by $\mathrm{ex}_u(\# F,n)$, the only result we are aware of is again the easy $n^{1+c/\log\log n}\le \mathrm{ex}_u(\# K_3, n)\le 2u(n)$ and the recent results \cite{fk,pss} on $\mathrm{ex}_u(\# P_k, n)$.
For this and more related results, see \cite{bmp}.
We would also like to remark that forbidden subgraphs of UDG are also systematically studied, see \cite{cm,gp}.

\subsubsection*{Acknowledgement.}

The main result was obtained while working on the Polymath16 project about the Hadwiger--Nelson problem and is related, but not directly connected to it.

We would like to thank G\'eza T\'oth for useful discussions about the multigraph crossing lemma, N\'ora Frankl for discussions about $\mathrm{ex}_u(\# F,n)$, Heiko Harborth for sending us Schade's thesis, and several anonymous referees for the improvement of the presentation and for independently verifying some numerical bounds.

{}
\end{document}